\tikzset{
error/.style = {circle, draw, fill=red, align=center,
                inner sep=0pt},
block/.style = {rectangle, draw, fill=blue!20, rounded corners,
                text width=5em, minimum height=4em, align=center},
        }
\newtheorem{theorem}{Theorem}[section]
\newtheorem{Lemma}[theorem]{Lemma}
\title[Fusions of the Tensor Square of a Strongly Regular Graph]{Fusions of the Tensor Square of a Strongly Regular Graph}
\author[A.~Herman]{Allen Herman\textsuperscript{1,*}}}
\thanks{\textsuperscript{*} The work of this author has been supported by NSERC.}
\address{\textsuperscript{1} Department of Mathematics and Statistics, University of Regina, Regina, Saskatchewan S4S 0A2, Canada}
\email{Allen.Herman@uregina.ca}
{\author[N.~Joshi]{Neha Joshi\textsuperscript{2}}}
\address{\textsuperscript{2} Department of Mathematics and Statistics, University of Regina, Regina, Saskatchewan S4S 0A2, Canada}
\email{Njp008@uregina.ca}
\begin{document}

\begin{abstract}
In this paper we determine all fusions of the association scheme $\mathcal{A} \otimes \mathcal{A}$, where $\mathcal{A}$ is the symmetric rank $3$ association scheme corresponding to a strongly regular graph.  This includes both guaranteed fusions, which are fusions for all symmetric rank $3$ association schemes $\mathcal{A}$, and specific case fusions, which only exist under restrictions on the parameters of the association scheme.  Along the way we will determine the fusions of wreath products of strongly regular graphs and the fusions of the tensor square of a symmetric rank $3$ table algebra.  This extends recent work of the authors and Meagher, which solved the same problem for the generalized Hamming scheme $H(2,\mathcal{A})$ of the association scheme obtained from a strongly regular graph.  The main results of this article show (1) the families of strongly regular graphs for which $\mathcal{A} \otimes \mathcal{A}$ has a special case fusion are the same families for which $H(2,\mathcal{A})$ has a special case fusion; and (2) the imprimitive strongly regular graphs are the only family of strongly regular graphs for which the wreath product $\mathcal{A} \wr \mathcal{A}$ has a special case fusion.  
\end{abstract}

\subjclass[2020]{Primary 05E30; Secondary 05C25}

\keywords{fusion, association schemes, table algebras, tensor product, wreath product}

\maketitle

\section{Introduction}

In \cite{HermanJoshiMeagher2022}, the authors and Karen Meagher determined all fusions of the generalized Hamming scheme $H(2,\mathcal{A})$, where $\mathcal{A}$ is the symmetric rank $3$ association scheme corresponding to a strongly-regular graph.  In contrast with earlier work of \cite{FadRussian} (see \cite{FadRussianTranslated}) that worked directly with the intersection numbers, the approach in \cite{HermanJoshiMeagher2022} was to use the the Bannai-Muzychuk criteria on the character table to establish the existence of each fusion.  As pointed out in \cite{HermanJoshiMeagher2022}, the adjacency algebra of $H(2,\mathcal{A})$ is isomorphic to the rank $6$ symmetric $2$-tensor subalgebra $Sym^2(\mathcal{A})$ of $\mathcal{A} \otimes \mathcal{A}$, so the authors asked if the fusions of other subalgebras of $\mathcal{A} \otimes \mathcal{A}$ could be classified in a similar fashion, most importantly for the wreath product $\mathcal{A} \wr \mathcal{A}$ and for the full tensor product $\mathcal{A} \otimes \mathcal{A}$.  The main results of this article classify these fusions.  As the wreath product has rank $5$, the problem is a bit easier than for the full tensor product, and does not require the assistance of a computer. For the tensor product, each of the $4140$ partitions of the set of $8$ nonidentity elements must be considered, as compared to the set of $52$ partitions that needed to be considered in \cite{HermanJoshiMeagher2022}.  To manage the extra partitions, the authors have implemented a sieve that eliminates the partitions whose corresponding fusion would not be consistent with the parameters of a symmetric rank $3$ table algebra.  For those that are consistent, the Bannai-Muzychuk criterion is applied to verify the partition gives a fusion under the conditions on parameters it imposes.  

Our technique applies more generally in the setting where $\mathcal{A}$ is the standard basis of a symmetric rank $3$ table algebra, so the main results can be interpreted in that setting.  For those unfamiliar with table algebras, the connection between tensor products of association schemes and table algebras is explained by Xu in \cite{Xu2013}.  That the wreath product of two association schemes is a fusion of their tensor product was noted by Song in \cite{Song2002}. Our results on rank $3$ fusions overlap earlier results of Sankey \cite{Sankey}.  
 
\section{Preliminaries}

\subsection{Parameters of strongly regular graphs}

Let $\Gamma$ be a strongly regular graph on a set of $n$ vertices.  One of the many equivalent definitions of a strongly regular graph is that the set of $n \times n$ $01$-matrices $\mathcal{A} = \{ A_0, A_1, A_2 \}$ is a basis for the adjacency algebra of an association scheme, where $A_0 =$ the $n \times n$ identity matrix, $A_1 =$ the adjacency matrix of $\Gamma$, and $A_2 =$ the adjacency matrix of the complement $\bar{\Gamma}$of $\Gamma$.  In particular, if $\Gamma$ is a strongly regular graph with parameters $(n,k,\mu,\nu)$, then these matrices satisfy $$A_1^2 = k A_0 + \mu A_1 + \nu A_2, ~~  A_1 A_2 = (k-1-\mu)A_1 + (k - \nu) A_2,$$ and $$A_2^2 = (n-1-k) A_0 + (n-2k+\mu)A_1 + (n-2k+\nu-2)A_2.$$  The algebra spanned by $\mathcal{A} = \{ A_0, A_1, A_2\}$ is thus exactly isomorphic to a symmetric rank $3$ standard integral table algebra spanned by the set $\mathcal{B} = \{ b_0, b_1, b_2 \}$ of left regular matrices $b_0 = I_3$, 
$$ b_1 = \begin{bmatrix} 0 & k & 0 \\ 1 & \mu & k-1-\mu \\ 0 & \nu & k - \nu \end{bmatrix}, \mbox{ and } b_2 = \begin{bmatrix} 0 & 0 & n-k-1 \\ 0 & k-1-\mu & n-2k+\mu \\ 1 & k-\nu & n-2k-2+\nu \end{bmatrix} $$
via the linear extension of the map $A_i \mapsto b_i$ for $i=0,1,2$.
Since this is an integral table algebra, the entries of $b_1$ and $b_2$ are nonnegative real numbers and the valencies $k$ and $\ell:=n-k-1$ are positive integers.  It is also a standard integral table algebra because the coefficients of the identity $b_0$ when $b_1^2$ and $b_2^2$ are expressed in terms of the basis $\mathcal{B}$ are the respective valencies $k$ and $\ell$.  Furthermore, the eigenvalues of $A_1$ and $A_2$ give the entries of the {\it character table} (a.k.a.~{\it first eigenmatrix}) of the association scheme $\mathcal{A}$: 
\[
\mathcal{P}(\mathcal{A}) = 
    \begin{blockarray}{cccc}
      \begin{block}{[ccc]c}
       1 & k & \ell & 1\\
       1 & r & -1-r  & f\\
    1 & s & -1-s  & g\\
      \end{block}
    \end{blockarray}.
\]
Each row of $\mathcal{P}$ corresponds to one eigenspace of $A_1$, with the entries in that row being the eigenvalues taken by $A_0$, $A_1$, and $A_2$, respectively, on that eigenspace. The number on the right is the dimension of this eigenspace, which gives the {\it multiplicity} of the eigenvalue of $A_1$.  The first common eigenspace for $A_1$ and $A_2$ is spanned by the all $1$'s vector, so it has multiplicity $1$ and the eigenvalues of $A_1$ and $A_2$ on its eigenvectors are the valencies $k$ and $\ell$, respectively.  The row orthogonality relations tell us the last two rows sum to $0$, 
that 
\begin{equation*}
   1 + \frac{rs}{k} + \frac{(-1-r)(-1-s)}{\ell} = 0, 
\end{equation*}  
and also give us formulas for the multiplicities $f$ and $g$: 
$$ 1 + \frac{r^2}{k} + \frac{(-1-r)^2}{\ell} = \frac{n}{f}, \mbox{ and } 
1 + \frac{s^2}{k} + \frac{(-1-s)^2}{\ell} = \frac{n}{g}. $$ 
The column orthogonality relation on the first two columns tells us  
$$ k + fr + gs = 0.$$

\begin{Lemma}\label{parameters}
Assume the last two rows of the character table are ordered so that $r \ge s$.  Then the character table values $k$, $\ell (=n-k-1)$, $r$, and $s$ satisfy 
\begin{enumerate}
    \item $\ell=\frac{-k(1+r+s+rs)}{(k+rs)}$;
    \item $k, \ell \ge 1$ and $k \ge r \ge 0 > -1 \ge s = \frac{-(k+kr+k\ell)}{(k+kr+r\ell)}$; 
    \item $\ell \ge -1-s \ge 0 > -1 \ge -1-r$;
    \item $(k+rs) \ge 0 \ge (1+r+s+rs)$; and 
    \item $\ell + (1 + r + s + rs) \ge 0$ and $\ell -1 + rs \ge 0$.  
\end{enumerate}
\end{Lemma}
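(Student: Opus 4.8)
The plan is to treat identity (1) and the closed form for $s$ in (2) as pure algebra, and then to read off every inequality in (2)--(5) either as a sign condition on $r$ and $s$ or as the nonnegativity of a structure constant of $\mathcal{A}$. For (1), expanding $(-1-r)(-1-s)=1+r+s+rs$ in the row-orthogonality relation $1+\frac{rs}{k}+\frac{(-1-r)(-1-s)}{\ell}=0$ and clearing the denominator $k\ell$ gives $\ell(k+rs)=-k(1+r+s+rs)$. Solving this for $\ell$ yields (1), and solving the same equation for $s$ collects the $s$-terms to give $s(\,r\ell+k+kr\,)=-(\,k\ell+k+kr\,)$, i.e.\ $s=\frac{-(k+kr+k\ell)}{k+kr+r\ell}$, the closed form in (2).

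Next I would record the basic data entirely within the table-algebra setting. The hypotheses already give $k,\ell\ge 1$. Since $r,s$ are the restricted eigenvalues of the nonnegative matrix $b_1$ (equivalently $A_1$), restricting $A_1^2=kA_0+\mu A_1+\nu A_2$ to the orthogonal complement of the all-ones vector, where $A_2$ acts as $-A_0-A_1$, gives $rs=\nu-k$. Nonnegativity of the entries of $b_1$ forces $0\le\nu\le k$ and $0\le\mu\le k-1$, while nonnegativity of the entries of $b_2$ records the two complement structure constants $n-2k+\mu\ge 0$ and $n-2k-2+\nu\ge 0$. I would then pin down the signs: from $rs=\nu-k\le 0$ and the ordering $r\ge s$ we get $r\ge 0\ge s$; substituting $k+rs=\nu$ into (1) gives $1+r+s+rs=-\frac{\ell\nu}{k}\le 0$, and since $(1+r)(1+s)=1+r+s+rs\le 0$ with $1+r\ge 1>0$, this forces $1+s\le 0$, i.e.\ $s\le -1$. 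Finally, Perron--Frobenius applied to the nonnegative matrices $b_1$ and $b_2$, whose spectral radii are the valencies $k$ and $\ell$, gives $r\le k$ and $-1-s\le\ell$.

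With these in hand the remaining parts assemble directly. Part (2) is the chain $k\ge r\ge 0>-1\ge s$ and part (3) is the chain $\ell\ge -1-s\ge 0>-1\ge -1-r$, both now justified term by term. For (4), $k+rs=\nu\ge 0$ and $1+r+s+rs=-\frac{\ell\nu}{k}\le 0$. For (5), substituting $k+rs$ back into (1) once more gives $\ell+(1+r+s+rs)=-\frac{\ell rs}{k}\ge 0$ because $rs\le 0$, while $\ell-1+rs=(n-k-1)-1+(\nu-k)=n-2k-2+\nu$ is precisely the complement structure constant already seen to be nonnegative.

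I expect the main obstacle to be the sign refinement, namely establishing $s\le -1$ rather than merely $s\le 0$; this is exactly what forces the loop back through (1) and the factorization $(1+r)(1+s)$, rather than the product $rs$ alone. The remaining difficulty is purely organizational: correctly matching each of the roughly ten individual inequalities across (2)--(5) to the right signed product or to the right nonnegative structure constant. Since every step uses only nonnegativity of the entries of $b_1,b_2$ and Perron--Frobenius for these matrices, no graph-combinatorial input is needed and the argument remains valid for an arbitrary symmetric rank $3$ standard integral table algebra.
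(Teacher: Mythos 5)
Your proposal is correct and follows essentially the same route as the paper: both derive (1) and the closed form for $s$ from row orthogonality, and obtain every inequality from nonnegativity of the structure constants (your $\mu,\nu$ are exactly the paper's $k+r+s+rs$ and $k+rs$) together with Perron--Frobenius for $b_1$ and $b_2$. The one genuine (and slightly cleaner) deviation is your derivation of $s\le -1$ from $(1+r)(1+s)=1+r+s+rs\le 0$ with $1+r>0$, where the paper instead argues from the closed form $s=\frac{-(k+kr+k\ell)}{k+kr+r\ell}$ that $0>s>-1$ would force $r>k$.
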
 

\begin{proof} 
First, the identities $\ell = \frac{-k(1+r+s+rs)}{(k+rs)}$ and $s = \frac{-(k+kr+k\ell)}{(k+kr+r\ell)}$ are consequences of the row orthogonality relation applied to the last two rows of $\mathcal{P}$. 

For the other properties, we use the fact that the columns of $\mathcal{P}$ form the basis of an algebra under entrywise multiplication that has the same structure constants as the basis $\mathcal{B} = \{b_0, b_1, b_2 \}$ (which also matches that of the basis $\mathcal{A}$).  This allows us to determine that the regular matrix of $b_1$ in the basis $\mathcal{B}$ to be 
$$\begin{bmatrix} 0 & k & 0 \\ 1 & k + r + s + rs & -(1 + r + s + rs) \\ 0 & k + rs & -rs \end{bmatrix}.$$   
Since $\mathcal{B}$ is the basis of a table algebra, the entries of this regular matrix are nonnegative real, which implies $1+r+s+rs \le 0$ and $k+rs \ge 0$.  Also, $k$ is the Perron-Frobenius eigenvalue of $A_1$, so $k \ge r,s \ge -|k|$.  Since $r > s$, $-rs \ge 0$ implies $r \ge 0 > s$ or $r > 0 \ge s$.  But $r \ge 0$ and $1 + r + s + rs \le 0$, so we must have $s < 0$.  Since $k+r+s+rs \ge 0 \ge 1+r+s+rs$, we must have $k \ge 1$.  Since we can interchange columns of $\mathcal{P}$ at the start we can also conclude $\ell \ge 1$. 
From the formula for $s = \frac{-(k+kr+k\ell)}{(k+kr+r\ell)}$, we can now see that having $0 > s > -1$ would imply that $r > k$, which is false, so $-1 \ge s$. 

\vspace{0.2cm}

We can also find the regular matrix of $b_2$ with respect to the basis $\mathcal{B}$, it is 
$$ \begin{bmatrix} 0 & 0 & \ell \\ 0 & -(1+r+s+rs) & \ell + (1 + r + s + rs) \\ 1 & -rs & \ell - 1 + rs \end{bmatrix}.$$
Since this is a nonnegative matrix when $\mathcal{B}$ is a table algebra, we can conclude $\ell + (1 + r + s + rs), \ell - 1 + rs \ge 0$. 
\end{proof}

\vspace{0.3cm}

The case $k=r$ (which is equivalent to that of $s=-1$) occurs in the case when the strongly-regular graph $\Gamma$ is disconnected and its complement is a complete multipartite graph.  When the roles of $A_1$ and $A_2$ are interchanged, this is the case $\ell=-1-s$ and $r=0$.  We will refer to these cases as being {\it imprimitive}, and the case $k>r$ and $-1>s$ as being primitive.  In the imprimitive case, if the graph $\Gamma$ is the disjoint union of $m+1$ copies of the complete graph on $k+1$ vertices, then the character table of the association scheme is 
\[
\mathcal{P} = 
    \begin{blockarray}{cccc}
      \begin{block}{[ccc]c}
     1 & k & m(1+k)  & 1\\
       1 & k & -1-k & m\\
    1 & -1 & 0  & k(1+m)\\
      \end{block}
    \end{blockarray}.
\]
As we will see, the imprimitive case will be the one where $\mathcal{A} \otimes \mathcal{A}$ has the most special case fusions. 

\subsection{The tensor square association scheme}\label{notation}

The \textit{tensor product scheme}, or \textit{Kronecker product}, of any two association schemes with sets of adjacency matrices $\mathcal{A} = \{A_0=I_n,A_1,\dots,A_d\}$ and $\mathcal{B}=\{B_0=I_{n'},B_1,\dots,B_{d'}\}$ is the association scheme of order $nn'$ and rank $(d+1)(d'+1)$ whose adjacency matrices are $\mathcal{A} \otimes \mathcal{B} = \{A_i \otimes B_j : 0 \le i \le d, 0 \le j \le d'\}$.  We will be interested in the tensor square $\mathcal{A} \otimes \mathcal{A}$ when $\mathcal{A}=\{A_0,A_1,A_2\}$, so this association scheme has rank $9$.  

We will use two convenient shorthand notations for elements of $\mathcal{A} \otimes \mathcal{A}$.  For $i,j \in \{0,1,2\}$, we write $A_{ij}$ for the element $A_i \otimes A_j$.  Later, we will make use of a single-index notation, which is convenient for identifying partitions of $\mathcal{A} \otimes \mathcal{A} - \{A_{00}\}$.  If we identify $A_{ij}$ with $C_{3j+i+1}$, for $i,j \in \{0,1,2\}$, then we can identify partitions of $\mathcal{A} \otimes \mathcal{A} - \{A_{00}\}$ with partitions of $\{2,\dots,9\}$, which we will also write with a shorthand notation.  For example, if $\mathcal{T}_{\mathcal{A}} = \{ A_0, A_1 + A_2 \}$ is the basis of the trivial fusion of $\mathcal{A}$, then the tensor product subalgebras with basis $\mathcal{T}_{\mathcal{A}} \otimes \mathcal{T}_{\mathcal{A}}$, $\mathcal{T}_{\mathcal{A}} \otimes {\mathcal{A}} $, and ${\mathcal{A}} \otimes \mathcal{T}_{\mathcal{A}}$ can be identified with the partitions $23|47|5689$, $23|4|56|7|89$, and $2|3|47|58|69$.  

What makes such a collection correspond to a {\it fusion} of $\mathcal{A} \otimes \mathcal{A}$ is that this subset of elements is a basis for the unital subalgebra of $\mathbb{C}[\mathcal{A} \otimes \mathcal{A}]$ that it generates.   This is a special situation, as in most cases a collection of disjoint sums of basis elements will generate a subalgebra of larger dimension than the size of the collection. 

$\mathcal{A} \otimes \mathcal{A}$ has three types of fusion subalgebras that are guaranteed to exist for all rank $3$ table algebra bases $A$.  The first type is the tensor product subalgebra type, which includes $\mathcal{A} \otimes \mathcal{A}$ and the three tensor products involving $\mathcal{T}_{\mathcal{A}}$ previously introduced.  The second is the symmetric tensor square subalgebra $Sym^2(\mathcal{A})$, which is known as the generalized Hamming scheme $H(2,\mathcal{A})$ when $\mathcal{A}$ corresponds to a strongly-regular graph.  Its basis is equal to the set of the elementary symmetric $2$-tensors
$$Sym^2(\mathcal{A}) = \{ A_{00}, A_{10}+A_{01}, A_{20}+A_{02}, A_{11}, A_{21}+A_{12}, A_{22} \}.$$
 Using the indices obtained from $C_{3j+i+1}=A_{ij}$, we identify this fusion of $\mathcal{A} \otimes \mathcal{A}$ with the partition $24|37|5|68|9$.  The third type consists of wreath products.  The full wreath product $\mathcal{A} \wr \mathcal{A}$ occurs in two different ways, as 
$$ (\mathcal{A} \otimes 1) \wr (1 \otimes \mathcal{A}) = \{ A_{00}, A_{10}, A_{20}, A_{01}+A_{11}+A_{21}, A_{02}+A_{12}+A_{22} \} $$
and 
$$ (1 \otimes \mathcal{A}) \wr (\mathcal{A} \otimes 1) = \{ A_{00}, A_{01}, A_{02}, A_{10}+A_{11}+A_{12}, A_{20}+A_{21}+A_{22} \}. $$
These are associated with the partitions $2|3|456|789$ and $258|369|4|7$, respectively.
These wreath products have several proper wreath product fusions, all of which involve $\mathcal{T}_{\mathcal{A}}$: $(\mathcal{T}_{\mathcal{A}} \otimes 1) \wr (1 \otimes \mathcal{T}_{\mathcal{A}})$, $(\mathcal{T}_{\mathcal{A}} \otimes 1) \wr (1 \otimes \mathcal{A})$, $(\mathcal{A} \otimes 1) \wr (1 \otimes \mathcal{T}_{\mathcal{A}})$, $(1 \otimes \mathcal{T}_{\mathcal{A}}) \wr (\mathcal{T}_{\mathcal{A}} \otimes 1)$, $(1 \otimes \mathcal{T}_{\mathcal{A}}) \wr (\mathcal{A} \otimes 1)$, and $(1 \otimes \mathcal{A}) \wr (\mathcal{T}_{\mathcal{A}} \otimes 1)$.  These can be identified, in order, with the six partitions $$23|456789, ~23|456|789, ~2|3|456789, ~235689|47, ~258|369|47,~ \mbox{and} ~235689|4|7.$$  In the third section we will verify that the 13 fusions of $\mathcal{A} \otimes \mathcal{A}$ listed so far are the only fusions that are guaranteed, as any other fusion imposes conditions on the parameters on the symmetric rank $3$ table algebra. 

\subsection{The Bannai-Muzychuk criterion}  Given a basis $\mathcal{A}=\{A_0, A_1, \dots, A_d\}$ of a table algebra (or adjacency algebra) of rank $d$, and a partition $\uptau=\{T_0 = \{0\}, T_1, \dots, T_{d'} \}$ of $\{0,1,\dots,d\}$, we let $A_{T_j} = \sum_{t \in T_j} A_t$ for all $j \in \{0,1,\dots,d'\}$.  The partition $\uptau$ corresponds to a fusion of the table algebra when $A^{\uptau} = \{A_{T_j}\}$ is also the basis of a table algebra. 

Let $\mathcal{P}$ be the $(d+1)\times (d+1)$-character table of the table algebra corresponding to $\mathcal{A}$.  Given a partition $\uptau$ of $\{0,1,\dots,d\}$, we define $\mathcal{P}_{\uptau}$ to be the order $|\uptau|\times (d+1)$ matrix with the rows indexed by the classes $T \in \uptau$. The row corresponding to a class $T\in \uptau$ is the sum of rows in $\mathcal{P}$ indexed by $t$ for all $t\in T$. Since $\mathcal{P}$ is a non-singular matrix, the rank of $\mathcal{P}_{\uptau}$ is equal to $|\uptau|$. Hence, the number of distinct columns in $\mathcal{P}_{\uptau}$ is at least $|\uptau|$. The next Lemma, known as the \textit{Bannai-Muzychuk criterion} (see Muzychuk \cite{JohnsonMuzychuk}), states a necessary and sufficient condition for a partition $\uptau$ of the rows of $\mathcal{P}$ to produce a fusion.

\begin{Lemma}{\label{Bannai-Muzychuk}}
A partition $\uptau=\{T_{0}=\{0\},\dots,T_{d'}\}$ of the index set $\{0,\dots,d\}$ determines a fusion subalgebra with basis $\mathcal{A}^{\uptau}=\{A_0, A_{T_1}, \dots, A_{T_{d'}} \}$ if and only if the number of different columns in $\mathcal{P}_{\uptau}$ equals $|\uptau|$.
\end{Lemma}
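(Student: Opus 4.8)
The plan is to diagonalise and reduce everything to the combinatorics of a partition of the common eigenspaces. Since $\mathbb{C}[\mathcal{A}]$ is commutative and semisimple, it has primitive idempotents $E_0,\dots,E_d$, one for each common eigenspace, and the character table records the expansions $A_i=\sum_{j=0}^{d}\mathcal{P}_{ji}E_j$, where $\mathcal{P}_{ji}$ is the eigenvalue of $A_i$ on the $j$-th eigenspace. Sending an element to its tuple of eigenvalues gives an algebra isomorphism $\Phi\colon\mathbb{C}[\mathcal{A}]\xrightarrow{\ \sim\ }\mathbb{C}^{d+1}$, where $\mathbb{C}^{d+1}$ carries pointwise multiplication. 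The single structural fact I would isolate first is this: a subspace of $\mathbb{C}^{d+1}$ that contains the all-ones vector is a unital subalgebra if and only if it is the space of functions constant on the blocks of some partition of the coordinate set $\{0,\dots,d\}$, in which case its primitive idempotents are exactly the indicator vectors of those blocks. This reduces the lemma to counting blocks of a partition of the eigenspaces.

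Next I would transport the candidate fusion through $\Phi$. The merged element $A_{T_b}=\sum_{t\in T_b}A_t$ has eigenvalue $\sum_{t\in T_b}\mathcal{P}_{jt}$ on the $j$-th eigenspace, so its image under $\Phi$ is precisely the column of $\mathcal{P}_\uptau$ indexed by $T_b$ (here one reads the rows of $\mathcal{P}_\uptau$ as indexed by the eigenspaces and its columns by the classes of $\uptau$, which is the transpose of the display in the text and is the one place where the relation/eigenspace duality must be tracked carefully). Hence $\Phi$ identifies the span $B=\langle A_{T_0},\dots,A_{T_{d'}}\rangle$ with the column space of $\mathcal{P}_\uptau$. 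I would then define the partition $\pi$ of the eigenspaces by declaring $j\sim j'$ when the corresponding rows of $\mathcal{P}_\uptau$ agree; every column of $\mathcal{P}_\uptau$ is constant on the blocks of $\pi$, so the column space is contained in the partition algebra $C_\pi$, which has dimension $|\pi|$ equal to the number of distinct columns of $\mathcal{P}_\uptau$.

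To finish I would run a dimension count. The columns of $\mathcal{P}_\uptau$ are linearly independent — this is exactly the rank computation already recorded, $\operatorname{rank}\mathcal{P}_\uptau=|\uptau|$, coming from the nonsingularity of $\mathcal{P}$ — so $\dim B=|\uptau|$, while $\dim C_\pi=|\pi|$ is the number of distinct columns. Since $B\subseteq C_\pi$ and $C_\pi$ is a unital subalgebra, $B$ is a subalgebra exactly when $B=C_\pi$, i.e. exactly when $|\uptau|=|\pi|$; and because $\pi$ is by construction the coarsest partition on which all the columns are constant, any unital-subalgebra structure on $B$ is forced to be $C_{\pi'}$ for a partition $\pi'$ refining $\pi$, whence $C_{\pi'}\supseteq C_\pi\supseteq B=C_{\pi'}$ collapses everything to $B=C_\pi$; this yields both implications. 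Thus $B$ is a fusion subalgebra if and only if the number of distinct columns of $\mathcal{P}_\uptau$ equals $|\uptau|$. The main obstacle, and the step I would write out most carefully, is the partition-algebra characterisation of unital subalgebras of $\mathbb{C}^{d+1}$ together with the ``$B$ a subalgebra $\Rightarrow B=C_\pi$'' half of the dimension count; a secondary point to verify is that $B$ is closed under the transpose, equivalently that each $T_b$ is a union of relations closed under $i\mapsto i^{*}$, which is automatic here since $\mathcal{A}$ is symmetric, so that $B$ really is the adjacency algebra of a fusion scheme and not merely a subalgebra.
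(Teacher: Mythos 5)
The paper does not actually prove this lemma: it is quoted as a known result with a citation to Muzychuk, so there is no in-text argument to compare yours against. Your proof is correct, and it is essentially the standard proof of the Bannai--Muzychuk criterion: pass to the Wedderburn isomorphism $\Phi\colon\mathbb{C}[\mathcal{A}]\to\mathbb{C}^{d+1}$ given by the eigenvalue maps, use the fact that the unital subalgebras of $\mathbb{C}^{d+1}$ under pointwise multiplication are exactly the partition algebras $C_\pi$ (spanned by the $0$--$1$ idempotents indicating the blocks of a partition $\pi$ of the coordinates), identify the $\Phi(A_{T_b})$ with the columns of $\mathcal{P}_{\uptau}$, and compare $\dim B=|\uptau|$ (which follows from the nonsingularity of $\mathcal{P}$, as the paper records) with $\dim C_\pi=$ the number of distinct columns. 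The two points you single out as needing the most care are the right ones: the coarsest-partition argument forcing $B=C_\pi$ whenever $B$ is a subalgebra, and the row/column convention in the paper's definition of $\mathcal{P}_{\uptau}$, which is indeed transposed relative to the displayed character tables and to how the criterion is actually applied in Sections 3--5. One small addition worth making explicit in a full write-up: to conclude that $\mathcal{A}^{\uptau}$ is the basis of a fusion \emph{table algebra} (or association scheme), and not merely of a subalgebra, you should also observe that the structure constants of the $A_{T_b}$ are sums of structure constants of the $A_i$ and hence nonnegative (integers, in the scheme case); you address closure under the involution, which is automatic here by symmetry, but not this nonnegativity, although it comes for free from the same computation that shows the product lands in the span.
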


\section{Fusions of the wreath product} 

If $\mathcal{P}(\mathcal{A})$ is the character table of $\mathcal{A}$, the character table of $\mathcal{A} \otimes \mathcal{A}$ is the Kr\"{o}necker product 
$\mathcal{P} \circ \mathcal{P}$.  When $\mathcal{A}$ is the basis of adjacency matrices of the association scheme corresponding to a strongly regular graph and the elements of $\mathcal{A} \otimes \mathcal{A}$ are labelled as in the previous section, the wreath product $(\mathcal{A} \otimes 1) \wr (1 \otimes \mathcal{A})$ corresponds to the partition $2|3|456|789$ of $\{2,\dots,9\}$.  When we apply the Bannai-Muzychuk criterion for this partition to the $9 \times 9$ character table $P \circ P$, we do indeed get exactly $5$ distinct rows, and these rows give us the character table of $(\mathcal{A} \otimes 1) \wr (1 \otimes \mathcal{A})$: 

\[
    \begin{blockarray}{ccccccc}
         & A_{00} & A_{10} & A_{20} & A_{01}+A_{11}+A_{21} & A_{02}+A_{12}+A_{22} & \\
      \begin{block}{c[ccccc]c}
 \chi_{00} & 1 & k & \ell & kn & \ell n & 1 \\
\chi_{01} & 1 & k & \ell & rn & n(-1-r) & m_r \\
\chi_{02} & 1 & k & \ell & sn & n(-1-s) & m_s \\
\chi_{11} & 1 & r & -1-r & 0 & 0 & nm_r \\
\chi_{21} & 1 & s & -1-s & 0 & 0 & nm_s \\
      \end{block}
    \end{blockarray}
\]

Our strategy to find all fusions of $\mathcal{A} \wr \mathcal{A}$ (with the above orientation) is to then apply the Bannai-Muzychuk criterion again to the $15$ partitions of $2|3|456|789$, and use it to determine the conditions on parameters for a partition to correspond to a special case fusion.  (As there are so few partitions, it is straightforward to work through all the possibilities by hand.  Nevertheless we will provide the details here to illustrate what our computer implementation accomplishes for the full tensor product in the next section.)  
  
\begin{theorem} Let $\mathcal{A}$ be the basis of a symmetric rank $3$ table algebra (i.e.~the set of adjacency matrices for the association scheme corresponding to a strongly-regular graph).  Consider the wreath product $(\mathcal{A} \otimes 1) \wr (1 \otimes \mathcal{A})$ as the fusion of $\mathcal{A} \otimes \mathcal{A}$ corresponding to the partition $2|3|456|789$ as described in \S \ref{notation}.  Then 

\begin{enumerate} 
\item $(\mathcal{A} \otimes 1) \wr (1 \otimes \mathcal{A})$ has guaranteed fusions corresponding to the partitions $2|3|456789$, $23|456|789$, and $23|456789$.  These correspond to wreath product subalgebras of $(\mathcal{A} \otimes 1) \wr (1 \otimes \mathcal{A})$ where either or both of the $\mathcal{A}$
s are replaced by the trivial fusion $\mathcal{T}_{\mathcal{A}}$.  Furthermore, these are the only nontrivial guaranteed fusions of $(\mathcal{A} \otimes 1) \wr (1 \otimes \mathcal{A})$.

\item $(1 \otimes \mathcal{A}) \wr (\mathcal{A} \otimes 1)$ has guaranteed fusions corresponding to the partitions $258|369|47$, $235689|4|7$, and $235689|47$, which correspond to wreath product subalgebras of $(1 \otimes \mathcal{A}) \wr (\mathcal{A} \otimes 1)$ where either or both of the $\mathcal{A}$'s are replaced by the trivial fusion $\mathcal{T}_{\mathcal{A}}$.  Furthermore, these are the only nontrivial guaranteed fusions of $(1 \otimes \mathcal{A}) \wr (\mathcal{A} \otimes 1)$.

\item The only special case fusions of $(\mathcal{A} \otimes 1) \wr (1 \otimes \mathcal{A})$ occur when $\mathcal{A}$ corresponds to an imprimitive strongly regular graph. The special case fusions in the case $k=r$ and $s=-1$ correspond to the partitions $2|3456789$, $23456|789$, and $2|3456|789$.  The special case fusions in the case $\ell=-1-s$ and $r=0$ correspond to the partitions $2456789|3$, $23789|456$, and $2789|3|456$.  

\item The only special case fusions of $(1 \otimes \mathcal{A}) \wr (\mathcal{A} \otimes 1)$ occur when $\mathcal{A}$ corresponds to an imprimitive strongly regular graph.  The special case fusions in the case $k=r$ and $s=-1$ correspond to the partitions $2345689|7$, $258|34679$, and $258|3469|7$.  The special case fusions in the case $\ell=-1-s$ and $r=0$ correspond to the partitions $2356789|4$, $24578|369$, and $2578|369|4$. 
\end{enumerate} 
\end{theorem}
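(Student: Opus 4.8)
The plan is to apply the Bannai--Muzychuk criterion (Lemma~\ref{Bannai-Muzychuk}) directly to the $5\times 5$ character table of $(\mathcal{A}\otimes 1)\wr(1\otimes\mathcal{A})$ displayed just before the theorem. For each of the $15$ partitions of the four nonidentity columns $A_{10}$, $A_{20}$, $A_{01}+A_{11}+A_{21}$, $A_{02}+A_{12}+A_{22}$, I would fuse the indicated columns by summing them and count the number of distinct rows in the resulting matrix; the partition yields a fusion exactly when this count equals the number of blocks, as in the derivation of the wreath product table itself. Two symmetries cut the work down. Transposing the tensor factors, $A_{ij}\mapsto A_{ji}$, induces the permutation $(2\,4)(3\,7)(6\,8)$ of $\{2,\dots,9\}$ and interchanges the two orientations $(\mathcal{A}\otimes 1)\wr(1\otimes\mathcal{A})$ and $(1\otimes\mathcal{A})\wr(\mathcal{A}\otimes 1)$; it carries the partitions of (1) to those of (2) and the partitions of (3) to those of (4), so it suffices to establish (1) and (3). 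The complement duality $\Gamma\leftrightarrow\bar\Gamma$, sending $(k,r,s)\mapsto(\ell,-1-s,-1-r)$ and swapping the columns $A_{10}\leftrightarrow A_{20}$ and $A_{01}+A_{11}+A_{21}\leftrightarrow A_{02}+A_{12}+A_{22}$, interchanges the two imprimitive conditions $\{k=r,\ s=-1\}$ and $\{\ell=-1-s,\ r=0\}$ together with their associated partitions, so within (3) it suffices to treat a single imprimitive case.

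For part (1) I would isolate the two structural features of the table: the rows $\chi_{11},\chi_{21}$ vanish on the columns $A_{01}+A_{11}+A_{21}$ and $A_{02}+A_{12}+A_{22}$, while the rows $\chi_{00},\chi_{01},\chi_{02}$ agree (with values $k$ and $\ell$) on the columns $A_{10}$ and $A_{20}$. Hence fusing $A_{10}$ with $A_{20}$ forces $\chi_{11}=\chi_{21}$ and leaves four distinct rows; fusing $A_{01}+A_{11}+A_{21}$ with $A_{02}+A_{12}+A_{22}$ forces $\chi_{01}=\chi_{02}$ and again leaves four; doing both leaves exactly three. These are the partitions $23|456|789$, $2|3|456789$, and $23|456789$. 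Because each of these two collapses is independent of the parameters, the corresponding fusions are guaranteed, and they are precisely the wreath products in which one or both factors is replaced by $\mathcal{T}_{\mathcal{A}}$. I would then observe that these are the \emph{only} parameter-free collapses available, since every other partition either fuses across the left/right block structure or fuses three classes at once; hence these three are the only nontrivial guaranteed fusions.

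For part (3) I would run through the remaining ten partitions (the all-singletons and all-merged partitions giving the wreath product and the trivial fusion). Six of them collapse to the correct count under exactly one imprimitivity condition; substituting the imprimitive table from \S2 and reapplying Lemma~\ref{Bannai-Muzychuk} should yield $2|3456789$, $23456|789$, $2|3456|789$ when $r=k,\ s=-1$, and, via the complement duality, $2456789|3$, $23789|456$, $2789|3|456$ when $\ell=-1-s,\ r=0$. The remaining four partitions should be fusions under no condition; here a short computation governed by the inequalities of Lemma~\ref{parameters} shows that the entries $kn,rn,sn$ and $\ell n,-(1+r)n,-(1+s)n$ distinguishing the rows $\chi_{00},\chi_{01},\chi_{02}$ prevent the required identification even after substituting either imprimitive table, so the distinct-row count always exceeds the block count.

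The hard part is the \emph{exactness} of the distinct-row count rather than the generic non-collapse. Imprimitivity introduces essentially a single new coincidence among the rows---either $r=k$, which lets $\chi_{11}$ join the $\chi_{0\bullet}$ block, or $s=-1$, which makes $\chi_{21}=(1,-1,0,0,0)$---and the delicate point is to track, partition by partition, whether that one coincidence is converted into precisely the number of row identifications the block count demands, neither fewer nor more. This is exactly what separates the genuine special-case fusions from the near-misses: for example $2789|3456$ collapses to four distinct rows against a block count of three even in the imprimitive case, so it is never a fusion, whereas $2789|3|456$ collapses to precisely three. Verifying this borderline behaviour, rather than the easy generic failures, is where the argument must be carried out with care.
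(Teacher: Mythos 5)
Your plan is essentially the paper's proof: apply the Bannai--Muzychuk criterion to the $5\times 5$ character table of $(\mathcal{A}\otimes 1)\wr(1\otimes\mathcal{A})$, run through the fifteen partitions of the four nonidentity classes, and transport everything to the other orientation via the flip permutation $(24)(37)(68)$. Your additional use of the complement duality $(k,r,s)\mapsto(\ell,-1-s,-1-r)$, i.e.\ the permutation $(23)(47)(58)(69)$, to reduce the two imprimitive families in part (3) to one is a legitimate economy that the paper does not exploit here (it treats both imprimitive cases explicitly), and it does map the listed partitions onto each other correctly.

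Two cautions. First, the proposal describes the case analysis rather than carrying it out: the claims that ``six of them collapse \ldots under exactly one imprimitivity condition'' and that the remaining four never yield a fusion are exactly the content that must be verified partition by partition (the paper does all ten: five succeed under an imprimitivity condition, one more by duality, and four fail outright --- two by an outright contradiction in the forced parameter equations, two because at least four rows stay distinct against a block count of three). Second, your one worked borderline example is miscounted: for $2789|3|456$ the partition $\uptau$ has \emph{four} classes (three nonidentity blocks plus $\{A_{00}\}$), and in the case $\ell=-1-s$, $r=0$ the five characters collapse to exactly four distinct rows ($\chi_{00}$, $\chi_{01}=\chi_{21}$, $\chi_{02}$, $\chi_{11}$), not three; with your stated count of three the Bannai--Muzychuk criterion would reject this partition, contradicting part (3) of the theorem. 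Since you correctly identify this borderline bookkeeping as the crux of the argument, you need to fix whether the identity class is included in both the block count and the row count before the verification is sound.
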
 

\begin{proof} 
First, note that since each of the three wreath products does give a table algebra, these do correspond to fusions of $(\mathcal{A} \otimes 1) \wr (1 \otimes \mathcal{A})$ that are guaranteed to exist for all symmetric rank $3$ table algebras $\mathcal{A}$.   

Now suppose $\uptau'$ is a partition other than these three for which $(\mathcal{A} \otimes 1) \wr (1 \otimes \mathcal{A})$ has a nontrivial fusion corresponding to $\uptau'$ for some symmetric rank $3$ table algebra $\mathcal{A}$.  There are ten possibilities for $\uptau'$, which we will consider one at a time.  

\vspace{0.2cm}

$\uptau'= 2|3456789$: Since $\chi_{21}(A_{10})=s$ is the only negative character value in its column, $\chi_{21}$ will be isolated.  By the Bannai-Muzychuk criterion, we must have $\chi_{01}=\chi_{02}=\chi_{11}$.  This implies $k=r$, which implies $s=-1$.  For this to be a rank $3$ fusion, we also need $\chi_{00} \ne \chi_{01}$.  Since $\ell+n(n-1) > \ell - n$, this is the case.  So this gives a fusion only in the imprimitive case $k=r$ and $s=-1$. 

\vspace{0.2cm}

$\uptau' = 3|2456789$: In this case $\chi_{11}$ will be isolated, so we must have $\chi_{01}=\chi_{02}=\chi_{21}$. Equating the values on $A_{20}$ we get $\ell = -1-s$, which implies $r=0$. So this will be a fusion in the other imprimitive case when $\ell=-1-s$ and $r=0$. 

\vspace{0.2cm}

$\uptau' = 23789|456$: In this case $\chi_{02}$ will be isolated, and we will have $\chi_{11}=\chi_{21}$.  Since the values of $\chi_{00}$ and $\chi_{01}$ on $A_{10}+A_{20}+A_{02}+A_{12}+A_{22}$ are $k + \ell + n\ell > k + \ell + n(-1-r)$, we can only have a rank $3$ fusion when $\chi_{01}=\chi_{11}=\chi_{21}$, which implies $rn=0$ and $k + \ell + n(-1-r)=-1$.  This implies $r=0$ and hence $\ell=-1-s$.  So this is a fusion in this imprimitive case. 

\vspace{0.2cm}

$\uptau' = 23456|789$: This time $\chi_{01}$ will be isolated and we will have $\chi_{11}=\chi_{21}$.  Since the values of $\chi_{00}$ and $\chi_{02}$ on $A_{10}+A_{20}+A_{01}+A_{11}+A_{21}$ satisfy $k+\ell+kn>k+\ell+sn$, the only way we can have a rank $3$ fusion is for $\chi_{02}=\chi_{11}$.  This requires $k+\ell+sn=-1$ and $n(-1-s)=0$, which holds when $s=-1$ and $k=r$.  So this gives a fusion when $k=r$ and $s=-1$.  

\vspace{0.2cm}

$\uptau'=2|3456|789$: Again we will have $\chi_{21}$ isolated.  Similarly $\chi_{01}$ takes only one negative value, $n(-1-r)$, on $A_{02}+A_{12}+A_{22}$, so it is also isolated.  So we must have that $\chi_{02}=\chi_{11}$, and that these are not equal to $\chi_{00}$.  Comparing the values of these characters on $A_{01}$ gives $k=r$, and on $A_{02}+A_{12}+A_{22}$ gives $n(-1-s)=0$.  So $k=r$ and $s=-1$.  Under these conditions, $\chi_{11}$ and $\chi_{00}$ agree on $A_{10}$ and $A_{02}+A_{12}+A_{22}$, but they take values $\ell+kn$ and $\ell-n$ on $A_{20}+A_{01}+A_{11}+A_{21}$, which are not equal.  So this does give a fusion in the case $k=r$ and $s=-1$.   

\vspace{0.2cm}

$\uptau'=2|3789|456$:  Again $\chi_{21}$ and $\chi_{02}$ must be isolated.  This implies the values of $\chi_{01}$ and $\chi_{11}$ will agree.  This gives $k=r$ and $kr=0$.  But then $r=0=k$, and this is a contradiction.  

\vspace{0.2cm}

$\uptau'= 2456|3|789$: In this case, $\chi_{01}$ and $\chi_{11}$ will be isolated, so in order for this partition to produce a fusion we must have $\chi_{02}=\chi_{21}$.  Equating their values on $A_{20}$ gives $\ell=-1-s$, and equating their values on $A_{02}+A_{12}+A_{22}$ we get $n(-1-s)=0$.  But this would imply $\ell = 0$, a contradiction. 

\vspace{0.2cm}

$\uptau'= 2789|3|456$: This time $\chi_{02}$ and $\chi_{11}$ will be isolated, so we must have $\chi_{01}=\chi_{21}$.  Equating their values on $A_{20}$ we get $\ell=-1-s$, and equating their values on $A_{01}+A_{11}+A_{21}$ gives $rn=0$, so $r=0$.  So this partition gives a fusion in the imprimitive case where $\ell=-1-s$ and $r=0$.

\vspace{0.2cm}

$\uptau' = 2456|3789$: The values of $\chi_{11}$, $\chi_{21}$, and $\chi_{10}$ on $A_{10}+A_{01}+A_{11}+A_{21}$ satisfy $k+rn > r > s$, and hence we have at least four distinct rows $\chi_{00}, ~ \chi_{01}, ~ \chi_{11}$ and $\chi_{21}$ in the modified character table.  By the Bannai-Muzychuk criterion, this partition will not produce a fusion of rank $3$. 

\vspace{0.2cm}

$\uptau' = 2789|3456$: This time we see that the values of $\chi_{20}$, $\chi_{11}$, and $\chi_{21}$ on $A_{10}+A_{02}+A_{12}+A_{22}$ satisfy $k+n(-1-s) > r > s$, so as shown above we have at least four distinct rows in the modified character table. Hence,this partition will not result in a rank $3$ fusion. 

\medskip
This completes the characterization of fusions of $(\mathcal{A} \otimes 1) \wr (1 \otimes \mathcal{A})$.  For the fusions of $(1 \otimes \mathcal{A}) \wr (\mathcal{A} \otimes 1)$, we need only apply the permutation $(24)(37)(68)$ to each partition giving a fusion of $(\mathcal{A} \otimes 1) \wr (1 \otimes \mathcal{A})$.  
\end{proof}

\section{Classifying fusions of the tensor square}
\label{sec:classifying}

In the previous section, we classified all possible fusions of the wreath products of symmetric rank 3 association schemes $\mathcal{A}$ with themselves.  In this section, aided by a computer program, we will apply the Bannai-Muzychuk criterion to produce guaranteed and special case fusions of the tensor square $\mathcal{A} \otimes \mathcal{A}$, and in the next section we will explain why these are all the possible fusions. 

We continue with the notation of \S 2, so $\mathcal{A} = \{A_0,A_1,A_2\}$, where $A_1$ is the adjacency matrix of a strongly-regular graph $\Gamma$ of order $n$ with eigenvalues $k \ge r \ge 0 > -1 \ge s$, and $A_2$ is the adjacency matrix of its complement $\bar{\Gamma}$, with valency $\ell = n - k - 1$ and eigenvalues $\ell$, $-1-s$, and $-1-r$.  Let $\chi_0$ be the valency character, and $\chi_1$ and $\chi_2$ be the other irreducible characters of the adjacency algebra, with $\chi_1(A_1)=r$ and $\chi_2(A_1)=s$, so the rows of the character table of $\mathbb{C}\mathcal{A}$ correspond to $\chi_0$, $\chi_1$, and $\chi_2$.  (We remark that the calculations in this section do not depend on $A_1$ being the adjacency matrix of a graph, we get the same fusions if $\mathcal{A}$ is only assumed to be the basis of a rank $3$ symmetric standard integral table algebra.)  With our assumptions on eigenvalues, the character table of $\mathcal{A} \otimes \mathcal{A}$, with columns indexed by $A_{ij}$ and columns indexed by $\chi_{ij} := \chi_i \otimes \chi_j$, is the following - with the $A_{00}$ column of all $1$'s omitted:  

{\tiny
\[
    \begin{blockarray}{cccccccccc}
          & A_{10} & A_{20} & A_{01} & A_{11} & A_{21} & A_{02} & A_{12} & A_{22}  \\
      \begin{block}{c[cccccccc]c}
     \chi_{00}     &    k      &     \ell  &    k      &    k^2  &   \ell k  &    \ell    &   k\ell  &   \ell^2 &  1 \\
\chi_{01}     &    k      &     \ell  &    r      &     kr    &   \ell r   & (-1-r)   &  k(-1-r) &  \ell(-1-r) & m_r \\
\chi_{02}    &    k      &     \ell  &    s      & k(-1-r)  &  \ell s   & (-1-s)  &  k(-1-s) &  \ell(-1-s) & m_s \\
\chi_{10}     &    r      &  (-1-r)  &    k      &  rk       & (-1-r)k  &    \ell   &  r \ell    &  (-1-r)\ell & m_r  \\
\chi_{11}      &    r      &  (-1-r)  &    r      &  r^2     & (-1-r)r  &  (-1-r)  &  r(-1-r)  &  (-1-r)^2 & m_r^2 \\
\chi_{12}     &    r      &  (-1-r)  &    s     &   r(-1-r) & (-1-r)s  &  (-1-s) &  r(-1-s)  &  (-1-r)(-1-s) & m_rm_s \\
\chi_{20}     &    s     &  (-1-s)  &    k     &   sk       & (-1-s)k &   \ell    &  s \ell    &  (-1-s) \ell   &  m_s \\
\chi_{21}       &    s     &  (-1-s)  &    r     &   sr        & (-1-s)r &   (-1-r) &  s(-1-r)  &  (-1-s)(-1-r) & m_rm_s \\
\chi_{22}     &    s     &  (-1-s)  &    s     &   s^2 & (-1-s)(-1-r) & (-1-s) & s(-1-s) &  (-1-s)^2 & m_s^2 \\
      \end{block}
    \end{blockarray}
\]}

As done in \S \ref{notation}, we relabel the $A_{ij}$ as $C_{3j+i+1}$ so that the columns of the character table are labelled with the indices $1,\dots,9$.  Then each fusion of $\mathcal{A} \otimes \mathcal{A}$ is naturally associated with a partition of $\{2,\dots,9\}$.  Using a computer, we check the condition of Lemma \ref{Bannai-Muzychuk} to find all of the partitions that will give a fusion of $\mathcal{A} \otimes \mathcal{A}$ for all symmetric rank $3$ table algebras $\mathcal{A}$.  The list of partitions corresponding to these guaranteed fusions of $\mathcal{A}\otimes \mathcal{A}$ matches the list given in \cite[\S 6]{HermanJoshiMeagher2022}.

\begin{Lemma} \label{Lemma:GuaranteedLattice}
The $13$ partitions of $\{2,\dots,9\}$ corresponding to the guaranteed fusions of $\mathcal{A} \otimes \mathcal{A}$ are: 
 \begin{center}
         \begin{tikzpicture}[scale=.5]
  \node (one) at (0,8) {$2|3|4|5|6|7|8|9$};
  \node (label) at (15,8) {{Rank $9$}};
    \node (d) at (-6,6) {$ \textcolor{cyan}{2|3|47|58|69}$};
  \node (c) at (0,6) {\underline{${24|37|5|68|9}$}};
  \node (1) at (6,6) {$\textcolor{cyan}{23|4|56|7|89}$}  ;

  \node (label) at (15,6) {{Rank $6$}};
  \node (b) at (-6,4) {$\textcolor{magenta}{2|3|456|789}$};
  \node (f) at (6,4) {$\textcolor{magenta}{4|7|258|369}$};

  \node (label) at (15,4) {{Rank $5$}};
  \node (p) at (-4,2) {$\textcolor{green}{23|456|789}$};
  \node (q) at (4,2) {$\textcolor{green}{47|258|369}$};

  \node (zero) at (-10,2) {$\textcolor{purple}{2|3|456789}$};
  \node (zeru) at (0,2) {$23|47|5689$};
  \node (zeroes) at (10,2) {$\textcolor{purple}{ 4|7|235689}$};
  \node (label) at (15,2) {{Rank $4$ }};
  \node (zeri) at (0,0) {\underline{$2347|5689$}};
  \node (rank3) at (-4,0) {\textcolor{red}{$23|456789$}};
  \node (Rank3) at (4,0) {\textcolor{red}{$47|235689$}};
  \node (label) at (15,0) {{Rank $3$}};
  \node (zery) at (0,-2) {$23456789$};
  \node (label) at (15,-2) {{Rank $2$}};
  
  \draw (one) -- (d) (one) -- (c) (one) -- (1)
(d) -- (zero) --  (rank3) -- (p)  (rank3) -- (zery) (1) -- (zeroes) -- (Rank3) -- (q) (Rank3) -- (zery) (zeru) -- (zeri)  -- (zery) (one) -- (b) -- (p)  (one) -- (f) -- (q) (d) -- (zeru) (1) -- (zeru)
  ;
 \draw  (zeri.east) to[bend right=15] (c.east);
 \draw  (c.west)to[bend right=15] (zeri.west);
\end{tikzpicture}
    \end{center}

%$$ 2|3|4|5|6|7|8|9, $$ 
%$$ 24|37|5|68|9, 23|4|56|7|89 \leftrightarrow 2|3|47|58|69, $$ 
%$$ 2|3|456|789 \leftrightarrow 4|7|258|369, $$
%$$ 23|456|789 \leftrightarrow 47|258|369, $$
%$$ 2|3|456789 \leftrightarrow 4|7|235689, 23|47|5689, $$
%$$ 2347|5689, $$ 
%$$ 23456789.$$
(Here highlighted fusions in the same color at a level indicates a pair of dual fusions obtained from applying the flip map $A_{ij} \mapsto A_{ji}$ to the tensor product, which is obtained when the permutation $(24)(37)(68)$ is applied to the partition. The underline partitions give the generalized Hamming scheme and its guaranteed fusion.) 
\end{Lemma}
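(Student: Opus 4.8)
The plan is to read the guaranteed fusions off the explicit $9\times 9$ character table $P\circ P$ of $\mathcal{A}\otimes\mathcal{A}$ (where $P=\mathcal{P}(\mathcal{A})$) by applying the Bannai--Muzychuk criterion of Lemma \ref{Bannai-Muzychuk} to every partition of the eight nonidentity classes $\{2,\dots,9\}$, just as was done for a single partition in \S3. The first step is to pin down the exact meaning of \emph{guaranteed}. Because $P$ is the first eigenmatrix of a symmetric rank $3$ table algebra it is nonsingular for every admissible parameter vector, and hence so is $P\circ P$; consequently the matrix obtained by merging the columns of the character table over the blocks of a partition $\uptau$ has rank $|\uptau|$, so its number of distinct rows is at least $|\uptau|$ for \emph{every} admissible $(k,\ell,r,s)$. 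This uniform lower bound is the crux. If the block structure of $\uptau$ already forces the merged table to have exactly $|\uptau|$ distinct rows \emph{as functions on the parameter region} — equivalently, as rational functions of $k,r,s$ after the substitution $\ell=\frac{-k(1+r+s+rs)}{k+rs}$ of Lemma \ref{parameters}(1) — then the count can neither drop below $|\uptau|$ (by the bound) nor exceed it (formally equal rows stay equal) at any admissible point, so the criterion holds identically and $\uptau$ is a fusion for all $\mathcal{A}$. Conversely, if the generic distinct-row count exceeds $|\uptau|$, the criterion fails at generic parameters and $\uptau$ is not guaranteed. Thus ``$\uptau$ is a guaranteed fusion'' is equivalent to a single parameter-free symbolic condition on the displayed table.

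With this reduction in hand, I would first verify the forward inclusion that every partition in the diagram is a guaranteed fusion. For the four tensor-product partitions ($\mathcal{A}\otimes\mathcal{A}$, $\mathcal{A}\otimes\mathcal{T}_{\mathcal{A}}$, $\mathcal{T}_{\mathcal{A}}\otimes\mathcal{A}$, $\mathcal{T}_{\mathcal{A}}\otimes\mathcal{T}_{\mathcal{A}}$), the symmetric square $Sym^2(\mathcal{A})=H(2,\mathcal{A})$, and the two wreath products $\mathcal{A}\wr\mathcal{A}$ together with their six proper wreath fusions, no computation is needed: each is built by a standard construction — tensor product, symmetric square, or wreath product of fusions of $\mathcal{A}$ — that produces a table algebra for every symmetric rank $3$ table algebra (see \cite{Xu2013}, \cite{Song2002}, \cite{HermanJoshiMeagher2022}). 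The two remaining nodes are $2347|5689$, the common coarsening of $Sym^2(\mathcal{A})$ and $\mathcal{T}_{\mathcal{A}}\otimes\mathcal{T}_{\mathcal{A}}$ recorded as the guaranteed fusion of the generalized Hamming scheme in \cite{HermanJoshiMeagher2022}, and $23456789$, the ever-present trivial rank $2$ fusion $\{A_{00},\sum_{(i,j)\neq(0,0)}A_{ij}\}$; each of these is in any case re-confirmed by the symbolic test below.

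The substantive direction is the reverse inclusion: no further partition is guaranteed. Here I would run the symbolic Bannai--Muzychuk test of the first paragraph across all $4140$ partitions of $\{2,\dots,9\}$, keeping exactly those whose formal distinct-row count equals the number of blocks, and check that the surviving list is the one displayed; as a control, this list should coincide with that of \cite[\S6]{HermanJoshiMeagher2022}. To keep the enumeration surveyable I would exploit the flip $A_{ij}\mapsto A_{ji}$, realized on labels by the permutation $(24)(37)(68)$: it is an algebra automorphism of $\mathcal{A}\otimes\mathcal{A}$ and therefore sends guaranteed fusions to guaranteed fusions, fixing five partitions — $2|3|4|5|6|7|8|9$, $24|37|5|68|9$, $23|47|5689$, $2347|5689$, $23456789$ — and interchanging the five colored pairs; it thus suffices to classify one representative from each flip-orbit. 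I expect the main obstacle to be the scale of this bookkeeping rather than any isolated difficulty: where \cite{HermanJoshiMeagher2022} had $52$ partitions to dispatch by hand, there are $4140$ here, and arranging the elimination so that the coincidence pattern is uniformly verified or excluded across all orbits is exactly the task delegated to the sieve of \S\ref{sec:classifying}; conceptually nothing beyond the genericity reduction above is required.

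Finally I would confirm the Hasse diagram. Once the finite set of guaranteed fusions is fixed, its order is determined purely combinatorially by refinement of partitions, and the drawn edges should be precisely its covering relations; this is a routine comparison on a small poset, with the flip involution again reflecting the left half of the picture onto the right. Among the five flip-fixed partitions, $2|3|4|5|6|7|8|9$ and $23456789$ are the two improper extremes (the scheme itself and the trivial fusion), so the thirteen \emph{proper} guaranteed fusions are the three remaining self-dual ones together with the five flip-pairs, matching the count in the statement. The only point needing explicit attention is the placement of $2347|5689$: one checks that it is a common coarsening of both $24|37|5|68|9$ (the two curved edges) and $23|47|5689$, which are the covering relations recorded there, completing the verification that the diagram is the full containment lattice.
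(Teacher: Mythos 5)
Your proposal is correct and follows essentially the same route as the paper: the authors likewise apply the Bannai--Muzychuk criterion (Lemma \ref{Bannai-Muzychuk}) symbolically, via computer, to the partitions of $\{2,\dots,9\}$ acting on the character table $\mathcal{P}\circ\mathcal{P}$, retaining exactly those partitions for which the merged table has the right number of distinct rows identically in the parameters, and cross-checking against the list in \cite[\S 6]{HermanJoshiMeagher2022}. Your explicit genericity reduction and the use of the flip involution $(24)(37)(68)$ to halve the casework are left implicit in the paper but are consistent with, and a clean justification of, what its computation does.
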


We remark that, in addition to the duality obtained from the flip map $A_{ij} \leftrightarrow A_{ji}$, each partition corresponding to a fusion of $\mathcal{A} \otimes \mathcal{A}$ above has a {\it switch partner} that corresponds to switching the order of $A_1$ and $A_2$ in both copies of $\mathcal{A}$.  The switch partner is obtained by applying the permutation $(24)(37)(59)(68)$ to the partition.  The above partitions are all fixed by this operation.   

\medskip
Next, for the other partitions of $\{2,\dots,9\}$, after we sum the columns of the character table according to the partition, equality of any pair of rows imposes conditions on our parameters.  For the partition to correspond to a fusion, the number of distinct rows that result has to match the size of the partition.  From our earlier results in \cite{HermanJoshiMeagher2022}, we know some families of $\mathcal{A}$'s where $H(2,\mathcal{A})$ has a special case fusion, so these will also have special case fusions for their tensor square.  It is easier to apply the Bannai-Muzychuk criterion to find the partitions corresponding to the fusions of their tensor squares directly, so we will do that first.   

\subsection{Strongly-regular graph families admitting special case fusions}

The families of strongly-regular graphs whose generalized Hamming scheme has special case fusions were found in \cite{HermanJoshiMeagher2022}.  Here we consider the fusions of the tensor square for these families of graphs. The full lists of nontrivial fusions corresponding to each of these families is also available in the {\tt TensorProductFusions.txt} file in our GitHub repository \cite{GitHubRepository}. 

\medskip
The first family corresponds to the imprimitive strongly-regular graphs, for which either $k=r$ and $s=-1$, or $\ell =-1-s$ and $r=0$. 

\begin{theorem}\label{Completegraphs}
If $\Gamma$ is a union of complete graphs and $\bar{\Gamma}$ is a complete multipartite graph, then the character table for the association scheme is

\[
\mathcal{P}(\mathcal{A})= 
    \begin{blockarray}{cccc}
      \begin{block}{[ccc]c}
        1 & r & m(1+r) & 1\\
       1 & r & -1-r  & m\\
    1 & -1 & 0  & r(1+m)\\
      \end{block}
    \end{blockarray}.
\]

For the above character table, we get the following $45$ additional fusions other than the guaranteed fusions in Lemma \ref{Lemma:GuaranteedLattice}. The partitions in red correspond to special case fusions of the generalized Hamming scheme. The partitions in the same color at a level indicates a pair of dual fusions obtained from applying the flip map $A_{ij} \mapsto A_{ji}$ to the tensor product, which is obtained when the permutation $(24)(37)(68)$ is applied to the partition.
\end{theorem}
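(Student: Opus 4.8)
The plan is to prove the two assertions separately: the shape of the character table, then the enumeration of the $45$ additional fusions. I would pin down the character table directly from the graph. Writing $\Gamma$ as the disjoint union of $m+1$ copies of $K_{r+1}$, so that $A_1 = I_{m+1}\otimes(J_{r+1}-I_{r+1})$, gives $n=(m+1)(r+1)$, valency $k=r$, and $\ell=n-1-k=m(1+r)$; diagonalizing $A_1$ block by block yields eigenvalue $r$ with multiplicity $m+1$ (splitting into the valency row together with an $m$-dimensional eigenspace) and eigenvalue $-1$ with multiplicity $r(1+m)$, while the complementary eigenvalues of $A_2=J-I-A_1$ are forced by $A_1+A_2=J-I$. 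This reproduces the displayed table, and it is exactly the imprimitive specialization $k\mapsto r$, $s\mapsto -1$, $\ell\mapsto m(1+r)$ of the general rank $3$ character table of \S 2 (Lemma \ref{parameters}); I would record it with a one-line check against the orthogonality relations rather than redo the computation.

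For the enumeration I would build the $9\times 9$ character table of $\mathcal{A}\otimes\mathcal{A}$ as the Kronecker product $\mathcal{P}\circ\mathcal{P}$, substitute $k=r$, $s=-1$ (so that $-1-s=0$) and $\ell=m(1+r)$ while keeping $r$ and $m$ as indeterminates, and then apply the Bannai--Muzychuk criterion (Lemma \ref{Bannai-Muzychuk}) exactly as in \S 3: for each partition of $\{2,\dots,9\}$ I sum the columns indexed by each block and accept the partition precisely when the number of distinct rows of the resulting matrix equals the number of blocks. Because $-1-s$ collapses to $0$ and $k=r$ forces several rows to agree on large sets of columns, many of the $4140$ partitions survive; the computer implementation with the sieve described in the introduction does the bulk of this, discarding up front every partition whose candidate fusion already fails a table-algebra constraint and applying the criterion only to the survivors. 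I would reduce the hand-verification using the flip duality $A_{ij}\mapsto A_{ji}$, realized by $(24)(37)(68)$, which is a genuine automorphism of $\mathcal{A}\otimes\mathcal{A}$ and hence permutes its fusions, together with the switch-partner relation of \S\ref{sec:classifying}, so that only orbit representatives need be checked directly. The output is the $13$ guaranteed fusions of Lemma \ref{Lemma:GuaranteedLattice} together with the $45$ listed partitions.

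The main obstacle is completeness rather than existence. Confirming that a single listed partition is a fusion is a routine column-sum-and-compare, but showing that the $45$ exhaust the extra fusions rests on the exhaustiveness of the search and, more delicately, on insisting that every accepted coincidence of rows hold as a polynomial identity in the indeterminates $r$ and $m$ rather than as a numerical accident at some special pair $(r,m)$. I would therefore perform all row comparisons symbolically, accepting a partition only when the relevant column-sums agree identically in $r$ and $m$; this guarantees the fusion persists for every admissible $(r,m)$ and simultaneously rules out the over-counting that sub-special coincidences (small $r$ or $m$) would produce and the under-counting that evaluating at an unlucky point would produce. A closing bookkeeping pass confirms that the $45$ partitions are closed under the flip map, are disjoint from the guaranteed $13$, and pair up consistently with the colour-coding in the displayed diagram.
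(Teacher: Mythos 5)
Your proposal is correct and follows essentially the same route as the paper: specialize the character table to $k=r$, $s=-1$, $\ell=m(1+r)$, form the Kronecker square, and apply the Bannai--Muzychuk criterion symbolically to all partitions of $\{2,\dots,9\}$, which is exactly what the paper's GAP sieve does; your explicit block-diagonalization of $A_1=I_{m+1}\otimes(J_{r+1}-I_{r+1})$ merely makes the character-table derivation self-contained, and your insistence on identities in the indeterminates $r,m$ rather than numerical coincidences is the right notion of ``fusion for the whole family.'' One small caution: the switch-partner permutation exchanges this family with the complementary imprimitive family ($r=0$, $\ell=-1-s$) rather than preserving it, so within this theorem only the flip $(24)(37)(68)$ may be used to cut down the orbit representatives that need direct checking.
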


{\footnotesize
 \begin{center}
         \begin{tikzpicture}[scale=.5]
  \node (rank8) at (-5,22) {\textcolor{green}{${2|3|4|5|6|78|9}$}};
  \node (rank8) at (5,22) {\textcolor{green}{$2|36|4|5|7|8|9$}};
  \node (label) at (15,22) {{\it rank $8$}};
  %%%%%%%%%%%%%%%%%%%%%%%%%%%%%%%%%%%%%%
  \node (rank7) at (-10,19) {\textcolor{green}{${2|3|4|5|6|789}$}};
  \node (rank7) at (-5,19) {\textcolor{blue}{${2|3|45|6|78|9}$}};
  \node (rank7) at (0,19) {${2|36|4|5|78|9}$};
  \node (rank7) at (10,19) {\textcolor{green}{${2|369|4|5|7|8}$}};
  \node (rank7) at (5,19) {\textcolor{blue}{${25|36|4|7|8|9}$}};
  \node (label) at (15,19) {{\it rank $7$}};

\node (label) at (15,16) {{\it rank $6$}};
  
  \node (rank6) at (-10,16) {\textcolor{blue}{${2|3|45|6|789}$}};
  \node (rank6) at (-5,16) {\textcolor{green}{${2|36|45|78|9}$}};
  \node (rank6) at (0,16) {${2|3678|4|5|9}$};
  \node (rank6) at (5,15) {\textcolor{olive}{${2|369|4|5|78}$}};
  \node (rank6) at (0,15) {${24|36|5|78|9}$};
  \node (rank6) at (-5,15) {\textcolor{olive}{${2|36|4|5|789}$}};
  \node (rank6) at (5,16) {\textcolor{green}{${25|36|4|78|9}$}};
  \node (rank6) at (10,16) {\textcolor{blue}{${25|369|4|7|8}$}};
 
\node (label) at (15,12) {{\it rank $5$}};

  \node (rank5) at (-10,12) {\textcolor{blue}{${2|36|45|789}$}};
   \node (rank5) at (10,12) {$\textcolor{blue}{25|369|4|78}$};
  \node (rank5) at (-5,12) {\textcolor{green}{${2|3|456|789}$}};
   \node (rank5) at (0,12) {${2|36789|4|5}$};
    \node (rank5) at (5,12) {\textcolor{green}{${258|369|4|7}$}};
     \node (rank5) at (-5,11) {\textcolor{cyan}{${2|3|4578|69}$}};
      \node (rank5) at (-10,11) {\textcolor{magenta}{${23|4|56|789}$}};

  \node (rank5) at (0,11) {${245|36|78|9}$};
   \node (rank5) at (5,11) {\textcolor{cyan}{${2356|4|7|89}$}};
    \node (rank5) at (10,11) {\textcolor{magenta}{${2|369|47|58}$}};
     \node (rank5) at (-10,10) {\textcolor{violet}{${2|3678|45|9}$}};
      \node (rank5) at (-5,10) {\textcolor{lime}{${2|369|45|78}$}};
       \node (rank5) at (0,10) {\textcolor{red}{${24|3678|5|9}$}};
        \node (rank5) at (5,10) {\textcolor{lime}{${25|36|4|789}$}};
         \node (rank5) at (10,10) {\textcolor{violet}{${25|36|4|789}$}};
     \node (rank5) at (-5,9) {\textcolor{orange}{$24|36|5|789$}};
     \node (rank5) at (5,9) {\textcolor{orange}{$24|369|5|78$}};

     \node (label) at (15,6) {{\it rank $4$}};

  \node (rank4) at (-10,6) {\textcolor{green}{${2|3456|789}$}};
  \node (rank4) at (-5,6) {\textcolor{violet}{${2|36789|45}$}};
   \node (rank4) at (0,6) {\textcolor{red}{${24|36789|5}$}};
    \node (rank4) at (10,6) {\textcolor{green}{${2578|4|369}$}};
     \node (rank4) at (-10,5) {\textcolor{cyan}{${2356|4|789}$}};
      \node (rank4) at (5,5) {\textcolor{purple}{${245|36|789}$}};
 \node (rank4) at (-5,5) {\textcolor{purple}{${245|369|78}$}};
  \node (rank4) at (5,6) {\textcolor{violet}{${25|36789|4}$}};
   \node (rank4) at (0,5) {\textcolor{red}{${245|3678|9}$}};
    \node (rank4) at (10,5) {\textcolor{cyan}{${4578|369|2}$}};
  \draw ;

\node (label) at (15,2) {{\it rank $3$}};
 \node (rank3) at (-10,2) {\textcolor{magenta}{${2|3456789}$}};
 \node (rank3) at (-5,2) {\textcolor{lime}{${23456|789}$}};
 \node (rank3) at (10,2) {\textcolor{magenta}{${2356789|4}$}};
 \node (rank3) at (5,2) {\textcolor{lime}{${24578|369}$}};
 \node (rank3) at (0,2) {\textcolor{red}{${245|36789}$}};
  
\end{tikzpicture}
    \end{center}
}

If $\Gamma$ is the complete multipartite graph, so $A_1$ and $A_2$ switch roles in the imprimitive association scheme, then we will have $r=0$ and $k=-s$, and the list of proper non-trivial fusions will be the switch partners of the fusions listed above.  These are obtained by applying the permutation $(23)(47)(59)(68)$ to the above partitions.  

\medskip
The next family is the family of conference graphs, which includes the symmetric Payley graphs and all their cospectral strongly-regular graphs. 

\begin{theorem}\label{Payley}
If $k=\ell$ and $s=-1-r$ then the character table of $\mathcal{A}$ has the form 

\[
\mathcal{P}(\mathcal{A})= 
    \begin{blockarray}{cccc}
      \begin{block}{[ccc]c}
        1 & 2(r+r^{2}) & 2(r+r^{2}) \bigstrut[t] & 1\\
1 & r & -1-r & 2(r+r^{2})\\
1 & -1-r & r & 2(r+r^{2})\\
      \end{block}
    \end{blockarray}.
\]

For the above character table, we get the following $11$ additional fusions other than the guaranteed fusions in Lemma \ref{Lemma:GuaranteedLattice}.

 \begin{center}
         \begin{tikzpicture}[scale=.5]
  \node (rank6) at (-7,9) {$27|34|59|6|8$};
  \node (label) at (15,9) {{\it rank $6$}};
  \node (rank5) at (0,6) {${23|47|59|68}$};

  \node (label) at (15,6) {{\it rank $5$}};
  \node (rank4i) at (10,2) {\textcolor{blue}{${23|4579|68}$}};
  \node (rk4ii) at (5,2) {\textcolor{blue}{${2359|47|68}$}};

  \node (rk4iii) at (-5,2) {\textcolor{olive}{${23|4678|59}$}};
  \node (rk4iv) at (-10,2) {\textcolor{red}{$2347|59|68$}};
  \node (rk4v) at (0,2) {\textcolor{olive}{${ 2368|47|59}$}};
  \node (label) at (15,2) {{\it rank $4$ }};
  \node (rank3i) at (-8,-2) {\textcolor{red}{$234678|59$}};
  \node (Rank3ii) at (2,-2) {\textcolor{red}{$234579|68$}};
  \node (rank3iii) at (-3,-2) {\textcolor{green}{$2359|4678$}};
  \node (Rank3iv) at (8,-2) {\textcolor{green}{$2368|4579$}};
  \node (label) at (15,-2) {{\it rank $3$}};

  \draw (rank6) -- (rk4iv) -- (rank5) -- (rk4iii) (rank5) -- (rank4i) (rank5) -- (rk4v) --  (rank5) -- (rk4ii) (rank3iii) -- (rk4iv) -- (rank3i) (Rank3ii) -- (rk4iv) -- (Rank3iv) (rank3iii) -- (rk4iii) -- (rank3i)
  (Rank3iv) -- (rank4i) -- (Rank3ii) (Rank3iv) -- (rk4v) -- (Rank3ii) (rank3iii) -- (rk4v) -- (rank3i) (Rank3iv) -- (rk4ii) -- (Rank3ii) (rank3iii) -- (rk4ii) -- (rank3i)
  ;
  \draw ;
\end{tikzpicture}
    \end{center}
(Here highlighted fusions in red indicate the self-dual partitions that give fusions of the generalized Hamming scheme. The fusions in the same color at a level indicates a pair of dual fusions obtained from applying the flip map $A_{ij} \mapsto A_{ji}$ to the tensor product, which is obtained when the permutation $(24)(37)(68)$ is applied to the partition.) 

\end{theorem}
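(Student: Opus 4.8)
The plan is to follow the template already used for Theorem \ref{Completegraphs}: first specialize the character table to the conference parameters so that every entry becomes a polynomial in one variable, then test partitions with the Bannai--Muzychuk criterion. I would begin by deriving the displayed form of $\mathcal{P}(\mathcal{A})$ from the hypotheses $k=\ell$ and $s=-1-r$. Substituting $s=-1-r$ into the identity $\ell=\frac{-k(1+r+s+rs)}{k+rs}$ of Lemma \ref{parameters}(1) gives $1+r+s+rs=-(r+r^2)$ and $k+rs=k-r-r^2$, so $\ell=\frac{k(r+r^2)}{k-r-r^2}$; imposing $\ell=k$ forces $k=\ell=2(r+r^2)$ and $rs=-(r+r^2)$. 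The multiplicity formulas then yield $m_r=m_s=k=2(r+r^2)$ (equivalently $n=2k+1$), and $-1-s=r$, which together reproduce the stated table. The payoff is that every eigenvalue entry of the rank-$9$ table $\mathcal{P}\circ\mathcal{P}$ is now a polynomial in the single parameter $r$, with $r$ ranging over the admissible conference values.

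For the sufficiency direction I would verify that each of the $11$ listed partitions passes the Bannai--Muzychuk criterion (Lemma \ref{Bannai-Muzychuk}): after summing the columns of $\mathcal{P}\circ\mathcal{P}$ according to the partition, the number of distinct rows must drop to exactly the size of the partition, and the required row coincidences must hold \emph{as identities in} $r$. The bookkeeping is roughly halved by the flip duality $A_{ij}\mapsto A_{ji}$, realized by the permutation $(24)(37)(68)$, which carries fusions to fusions; it therefore suffices to check one representative from each color-coded dual pair (blue, olive, green) together with the self-dual partitions. Among the self-dual partitions, the three highlighted in red are precisely the ones that restrict to fusions of the generalized Hamming scheme $Sym^2(\mathcal{A})=H(2,\mathcal{A})$, so their validity can be imported from \cite{HermanJoshiMeagher2022}, leaving only a handful of genuinely new column-sum computations to carry out.

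The harder direction, and the main obstacle, is completeness: showing that no partition of $\{2,\dots,9\}$ beyond the $13$ guaranteed fusions of Lemma \ref{Lemma:GuaranteedLattice} and these $11$ produces a fusion for the conference family. Since there are $4140$ partitions, this cannot be done individually by hand and instead relies on the computer sieve of \S\ref{sec:classifying}: for each partition one forms the merged column-sums as polynomials in $r$ and tests whether the distinct-row count equals the partition size identically in $r$. The conceptual subtlety to guard against is an accidental coincidence---two merged rows agreeing at some sporadic value of $r$ but not for all admissible $r$; because we seek fusions valid across the entire conference family, only genuine polynomial identities in $r$ are accepted, and the sieve confirms that exactly the $24$ stated partitions satisfy them. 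Finally, I would note as a consistency check that the switch symmetry $A_1\leftrightarrow A_2$ is an automorphism of the conference parameter set (it interchanges $r\leftrightarrow s$ and fixes $k=\ell$), so the resulting fusion list must be closed under it, in agreement with the lattice diagram.
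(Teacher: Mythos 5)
Your proposal follows essentially the same route as the paper: specialize the parameters via the orthogonality relations of Lemma \ref{parameters} to get $k=\ell=2(r+r^2)$ so that the tensor-square character table depends on the single variable $r$, then verify the listed partitions (and rule out the rest) by applying the Bannai--Muzychuk criterion with computer assistance. The extra organizational devices you mention --- halving the checks via the flip duality and importing the red self-dual partitions from the generalized Hamming scheme results --- are consistent with, but not required by, the paper's GAP-based verification.
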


\begin{proof}
We applied the orthogonality conditions from Section $2$ to the conditions $k=\ell$ and $s=-1-r$ and obtained $k=\ell=2(r+r^2)$. Hence, our character table can be expressed in one variable. These fusions were checked using Lemma \ref{Bannai-Muzychuk} using GAP \cite{GAP4}.  For the GAP code see {\tt TensorProductFusions.txt} file on our GitHub respository \cite{GitHubRepository}. 
\end{proof}

\begin{theorem}\label{News}
If $k=s^2$, $\ell=-2s$ with $r=1$, then the character table for the association scheme is

\[
\mathcal{P}(\mathcal{A})= 
    \begin{blockarray}{cccc}
      \begin{block}{[ccc]c}
        1 & s^2 & -2s & 1\\
       1 & 1 & -2  & s^2\\
    1 & s & -1-s  & -2s\\
      \end{block}
    \end{blockarray}.
\]

\begin{enumerate}[label=(\roman*)]

\item If $\Gamma$ is the cartesian product of two complete graphs with order $s+1$ implying $k=s^2$ and $r=1$, then for the above character table, we get the following rank $5$ fusion other than the guaranteed fusions in Lemma \ref{Lemma:GuaranteedLattice}.
$$ 249|37|5|68 $$

\item If $\Gamma$ is the complement of the cartesian product of two complete graphs with order $r+2$ implying $k=2(1+r)$, $\ell=(1+r)^{2}$ and $s=-2$, then there is an additional rank $5$ fusion of $\mathcal{A} \otimes \mathcal{A})$ which is the switch partner of part one above:
$$ 24|375|68|9 $$
\end{enumerate}

\end{theorem}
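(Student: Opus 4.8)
The plan is to fix the first eigenmatrix from the parameter constraints, verify the partition of part~(i) through the Bannai--Muzychuk criterion, and then obtain part~(ii) for free by the switch duality. For the character table, I would specialize the generic first eigenmatrix $\mathcal{P}(\mathcal{A})$ of \S2 at $r=1$ and $k=s^2$. Lemma~\ref{parameters}(1) gives $\ell=\frac{-k(1+r+s+rs)}{k+rs}$, and these substitutions collapse it to $\ell=\frac{-2s^2(1+s)}{s(s+1)}=-2s$, so the hypothesis $\ell=-2s$ is in fact forced. Then $n=1+k+\ell=(1-s)^2$, whence $f+g=n-1=s^2-2s$, and feeding this together with the column orthogonality relation $k+fr+gs=0$ (that is, $s^2+f+gs=0$) into a $2\times2$ linear system yields $g=-2s$ and $f=s^2$, exactly the stated table. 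I would also note that this parameter set is realized by the lattice (rook's) graph $H(2,m)$ and its complement, so the scheme genuinely occurs.

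For part~(i) I would form the $9\times9$ Kronecker table $\mathcal{P}\circ\mathcal{P}$ of $\mathcal{A}\otimes\mathcal{A}$ written above, replace every entry using $k=s^2$, $\ell=-2s$, $r=1$, $-1-r=-2$, and sum the columns over the blocks $\{A_{10},A_{01},A_{22}\}$, $\{A_{20},A_{02}\}$, $\{A_{11}\}$, $\{A_{21},A_{12}\}$ of the partition $249|37|5|68$ (recalling $C_{3j+i+1}=A_{ij}$). By Lemma~\ref{Bannai-Muzychuk} it then suffices to show the resulting $9\times5$ array has exactly five distinct rows. The computation should produce the three row-coincidences $\chi_{01}=\chi_{10}=\chi_{22}$, $\chi_{02}=\chi_{20}$, and $\chi_{12}=\chi_{21}$, with $\chi_{00}$ and $\chi_{11}$ isolated; that the five groups really are distinct is clearest on the singleton block $\{A_{11}\}$ (index~$5$), where they take the values $s^4$, $s^3$, $s^2$, $s$, and $1$, which are pairwise distinct whenever $s\le-2$. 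Five rows give the rank~$5$ fusion, and since $249|37|5|68$ is absent from Lemma~\ref{Lemma:GuaranteedLattice} it is a genuine special-case fusion.

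Part~(ii) I would deduce from part~(i) by the switch symmetry $A_1\leftrightarrow A_2$ in both tensor factors, which acts on the single-index labels as the permutation $(23)(47)(59)(68)$. On parameters it sends $(k,\ell,r,s)=(s^2,-2s,1,s)$ to $(-2s,\,s^2,\,-1-s,\,-2)$; writing the new positive eigenvalue as $r=-1-s$ recasts these as $k=2(1+r)$, $\ell=(1+r)^2$, $s=-2$, precisely the hypotheses of part~(ii), while the same permutation carries $249|37|5|68$ to $24|375|68|9$. Hence that partition is automatically a rank~$5$ fusion for the complementary family.

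I expect the only real work to be the identity-in-$s$ bookkeeping of the second stage: one must check that the three coincidences hold across \emph{all} of the merged columns as polynomial identities in $s$, so that the partition works for the whole one-parameter family rather than at isolated values, and confirm that no further collision occurs among the five survivors for $s\le-2$, since an unexpected extra equality would drop the rank below $5$. Everything else is routine substitution into the tables already displayed.
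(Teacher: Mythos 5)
Your proposal is correct and follows essentially the same route as the paper: derive the eigenmatrix from the orthogonality relations, verify the partition $249|37|5|68$ by the Bannai--Muzychuk criterion (Lemma \ref{Bannai-Muzychuk}) applied to the summed columns of $\mathcal{P}\circ\mathcal{P}$, and obtain part (ii) by the switch permutation $(23)(47)(59)(68)$ — the only difference being that the paper delegates the row-coincidence check to its GAP computation while you carry it out by hand, correctly identifying the classes $\{\chi_{00}\}$, $\{\chi_{01},\chi_{10},\chi_{22}\}$, $\{\chi_{02},\chi_{20}\}$, $\{\chi_{11}\}$, $\{\chi_{12},\chi_{21}\}$ and separating them via the $A_{11}$ column.
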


\begin{theorem}\label{Crazyrank4}
If $k=3-s-r$ and $\ell=5+s+r$ then the character table of the association scheme, $\mathcal{A}$ has the form 

\[
\mathcal{P}(\mathcal{A})= 
    \begin{blockarray}{cccc}
      \begin{block}{[ccc]c}
       1 & 3-s-r & 5+s+r  & 1\\
       1 & r & -1-r  & f\\
    1 & s & -1-s & g \\
      \end{block}
    \end{blockarray}.
\]

For the above character table, we get the following additional fusion other than the guaranteed fusions in Lemma \ref{Lemma:GuaranteedLattice}.
$$ 249|37|5|68 $$
\end{theorem}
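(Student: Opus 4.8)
The plan is to verify the single partition $\tau = 249|37|5|68$ against the Bannai--Muzychuk criterion (Lemma \ref{Bannai-Muzychuk}) applied to the Kronecker square character table $\mathcal{P}\circ\mathcal{P}$, just as in the wreath-product theorem. The first task is to make that character table explicit. Substituting $k=3-s-r$ and $\ell=5+s+r$ gives $k+\ell=8$, so $n=9$, and feeding these into the row-orthogonality identity $\ell=\frac{-k(1+r+s+rs)}{k+rs}$ of Lemma \ref{parameters} collapses the parameters to the single relation $r-s=3$, whence $k=-2s$ and $\ell=8+2s$. Combining this with the multiplicity relation $k+fr+gs=0$ gives $f=-2s$ and $g=8+2s$, so that $f=g$ exactly when $s=-2$; since a symmetric integral rank $3$ table algebra with $f\ne g$ has integral eigenvalues, the primitivity hypotheses ($k>r$, $s<-1$) then leave only $r=1$, $s=-2$, $k=\ell=4$, $f=g=4$ --- the character table of the Paley graph on nine vertices. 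This makes all nine rows $\chi_{ij}$ of $\mathcal{P}\circ\mathcal{P}$ completely explicit.

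Next I would form $\mathcal{P}_\tau$ by summing the columns of $\mathcal{P}\circ\mathcal{P}$ over the four classes $\{A_{10},A_{01},A_{22}\}$, $\{A_{20},A_{02}\}$, $\{A_{11}\}$, $\{A_{21},A_{12}\}$. Writing $a_i=\chi_i(A_1)$ and $b_i=\chi_i(A_2)$, the four class sums for the row $\chi_{ij}$ are $a_i+a_j+b_ib_j$, $b_i+b_j$, $a_ia_j$ and $a_ib_j+b_ia_j$, each of which is symmetric in $i,j$. Because $\tau$ is invariant under the flip $(24)(37)(68)$, this at once identifies $\chi_{ij}$ with $\chi_{ji}$ and reduces the nine rows to the six representatives $\chi_{00},\chi_{01},\chi_{02},\chi_{11},\chi_{12},\chi_{22}$. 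The heart of the argument is to show that under the parameters found above exactly one further coincidence occurs, namely $\chi_{01}=\chi_{22}$ (so $\chi_{01}=\chi_{10}=\chi_{22}$), while $\chi_{00},\chi_{02},\chi_{11},\chi_{12}$ remain pairwise distinct and distinct from $\chi_{01}$. Verifying $\chi_{01}=\chi_{22}$ reduces to the four scalar identities $kr=s^2$, $\ell-1-r=2(-1-s)$, $k+r+\ell(-1-r)=2s+(-1-s)^2$ and $\ell r+k(-1-r)=2s(-1-s)$, all of which hold at $r=1$, $s=-2$. With exactly $5=|\tau|$ distinct rows, Lemma \ref{Bannai-Muzychuk} yields the claimed rank $5$ fusion, and I would finish by checking that $\tau$ is none of the thirteen guaranteed fusions of Lemma \ref{Lemma:GuaranteedLattice}, so it is a genuine special case fusion.

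The step I expect to be the main obstacle is certifying that the distinct-row count is exactly $5$ and not smaller, i.e.\ ruling out any second coincidence among the six representatives. The cleanest device is the $A_{11}$-coordinate $a_ia_j$, which takes the six values $k^2,kr,ks,r^2,rs,s^2 = 16,4,-8,1,-2,4$; the only repetition is $kr=s^2=4$, which is precisely the coincidence $\chi_{01}=\chi_{22}$, and every other pair already differs in this single coordinate. A more delicate point, which justifies the work done in the first paragraph, is that $\chi_{01}=\chi_{22}$ is \emph{not} an identity in $r,s$ along the whole feasibility curve $r-s=3$ but holds only at the isolated realizable point $s=-2$; consequently the reduction of the two parameter equations via the orthogonality and integrality constraints of Lemma \ref{parameters} is exactly what isolates the correct character table before the criterion is applied.
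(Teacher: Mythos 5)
Your overall strategy is the same as the paper's: the paper establishes this theorem by applying the Bannai--Muzychuk criterion (Lemma \ref{Bannai-Muzychuk}) to the Kronecker square $\mathcal{P}\circ\mathcal{P}$, delegating the row-coincidence check to GAP, whereas you carry out the same check by hand after first pinning down the parameters. Your parameter reduction is correct: $k+\ell=8$ forces $n=9$, row orthogonality gives $(r-s)^2=9$ hence $r-s=3$, and then $k=-2s$, $\ell=8+2s$, $f=-2s$, $g=8+2s$. Your verification at $r=1$, $s=-2$ is also correct, and the device of using the $A_{11}$-coordinate $a_ia_j\in\{16,4,-8,1,-2,4\}$ to certify that $\chi_{01}=\chi_{22}$ is the \emph{only} extra coincidence is clean and complete.

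There is, however, one genuine gap in the step ``the primitivity hypotheses ($k>r$, $s<-1$) then leave only $r=1$, $s=-2$.'' With $r-s=3$ and $r,s$ integers satisfying $r\ge 0$ and $s\le -1$, the candidates are $(r,s)=(2,-1)$, $(1,-2)$, and $(0,-3)$. The conditions you cite eliminate $(2,-1)$ but \emph{not} $(0,-3)$, which satisfies $k=6>0=r$ and $s=-3<-1$; this is the other imprimitive case ($\ell=2=-1-s$, the scheme of $K_{3,3,3}$), and for it the partition $249|37|5|68$ does \emph{not} give a fusion --- the $\{A_{20},A_{02}\}$ column sums $b_i+b_j$ are already pairwise distinct on all six representatives, so $\mathcal{P}_\tau$ has six distinct rows, not five. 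To close the argument you must also impose $r>0$ (equivalently $\ell>-1-s$), which is part of the paper's full primitivity assumption in \S 5; the parenthetical ``$k>r$, $s<-1$'' alone is not enough. (The same caveat applies to $(2,-1)$, i.e.\ $3K_3$: there too all six rows stay distinct, so the theorem really only concerns the Paley parameters $(9,4,1,2)$; the paper's remark that three order-$9$ graphs ``satisfy the conditions'' should not be read as asserting the fusion for the two imprimitive ones, which are covered instead by Theorem \ref{Completegraphs}.) A minor further remark: the detour through ``$f\ne g$ implies integral eigenvalues'' is unnecessary here, since $r+s$ is an integer (a difference of structure constants) and $r-s=3$, so $r$ and $s$ are rational algebraic integers, hence integers, in every case.
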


It was noted in \cite{HermanJoshiMeagher2022} that only three strongly-regular graphs satisfy the conditions of Theorem \ref{Crazyrank4} are the ones of order $9$: the $9$-gon of valency $2$, its complement of valency $6$, and the the Payley graph of order $9$ (with its isomorphic complement).  

\begin{theorem}\label{Clebsch}
If $k=r(3+r)$ and $\ell=(3+r)$ with $s=-2$ then the character table, $\mathcal{P}(\mathcal{A})$ has the form 
\[
\mathcal{P}(\mathcal{A})= 
    \begin{blockarray}{cccc}
      \begin{block}{[ccc]c}
       1 & r(3+r) & 3+r  & 1\\
       1 & r & -1-r & (3+r)\\
       1 & -2 & 1 & r(3+r)\\
      \end{block}
    \end{blockarray}.
\]

\begin{enumerate}[label=(\roman*)]

\item For the above character table, we get the following additional fusion other than the guaranteed fusions in Lemma \ref{Lemma:GuaranteedLattice}.
$$ 249|35678 $$

\item The above fusion has a switch partner which gives the rank $3$ fusion, 
$$ 24689|357 $$
The graph $\Gamma$ in this case satisfies the conditions, $\ell=-k(s+1)$ and $k=2-s$ with $r=1$.
\end{enumerate}

\end{theorem}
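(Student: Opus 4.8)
The plan is to treat the three assertions in turn, all via the Bannai--Muzychuk criterion (Lemma \ref{Bannai-Muzychuk}) in the operational form used in Section \ref{sec:classifying}: sum the columns of $\mathcal{P}\circ\mathcal{P}$ according to the partition and check that the number of distinct resulting rows equals the size of the partition. First I would confirm the displayed $\mathcal{P}(\mathcal{A})$. Substituting $s=-2$ (so $-1-s=1$), $k=r(3+r)$, and $\ell=3+r$ into the generic rank-$3$ eigenmatrix gives $n=1+k+\ell=(r+2)^2$, and the multiplicity formulas of Section 2 yield $m_r=3+r$ and $m_s=r(3+r)=k$, which are exactly the right-hand multiplicities shown. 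I would also note these hypotheses are internally consistent: for $s=-2$ the identity $\ell=-k(1+r+s+rs)/(k+rs)$ of Lemma \ref{parameters} collapses to $\ell=k(1+r)/(k-2r)$, and feeding in $k=r(3+r)$ returns $\ell=3+r$.

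For part (i) I would verify that merging the classes $\{A_{10},A_{01},A_{22}\}$ (columns $2,4,9$) and $\{A_{20},A_{11},A_{21},A_{02},A_{12}\}$ (columns $3,5,6,7,8$) produces exactly three distinct rows. The computational shortcut is that each non-principal character of $\mathcal{A}$ sums to zero over the classes of $\mathcal{A}$, so each non-principal character $\chi_{ij}$ of $\mathcal{A}\otimes\mathcal{A}$ sums to $-1$ over the eight non-identity classes; hence the value of $\chi_{ij}$ on the second merged class is $-1$ minus its value on the first, and I need only evaluate the first. Writing $p_i=\chi_i(A_1)$ and $q_i=\chi_i(A_2)$, that value is $V_{ij}=p_i+p_j+q_iq_j$. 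Tabulating $V_{ij}$ across the eight non-principal characters with the given parameters, I expect the collapse $V_{ij}=-3$ on $\chi_{01},\chi_{10},\chi_{12},\chi_{21},\chi_{22}$ and $V_{ij}=r^2+4r+1$ on $\chi_{02},\chi_{20},\chi_{11}$; together with the principal row (whose first-class value is the valency $2k+\ell^2=3(r+1)(r+3)$) this is three rows. To finish I would check these three first-class values are pairwise distinct, which reduces to $(r+2)^2\ne 0$, so Lemma \ref{Bannai-Muzychuk} certifies the fusion $249|35678$.

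For part (ii) I would appeal to the switch symmetry discussed after Lemma \ref{Lemma:GuaranteedLattice}: interchanging $A_1$ and $A_2$ in both tensor factors is an automorphism of $\mathbb{C}[\mathcal{A}\otimes\mathcal{A}]$ permuting the standard basis by $(23)(47)(59)(68)$, hence carrying fusions to fusions, and it sends the partition $249|35678$ to $24689|357$. Since the switch replaces $(k,\ell,r,s)$ with $(\ell,\,k,\,-1-s,\,-1-r)$, the hypotheses $s=-2$, $k=r(3+r)$, $\ell=3+r$ become $r=1$, $k=2-s$, and $\ell=-k(s+1)$ for the switched scheme, which are precisely the conditions stated; alternatively the column-sum computation of part (i) can simply be rerun for $24689|357$ under these parameters.

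The only real obstacle is arithmetic bookkeeping in part (i): evaluating the eight quantities $V_{ij}$ without sign errors and seeing that they take just two values is exactly where $s=-2$ together with $k=r(3+r)$ are needed. With only $s=-2$ imposed the $V_{ij}$ take the four values $k+r-\ell(1+r)$, $k+\ell-2$, $r^2+4r+1$, and $-3$, and the hypotheses $k=r(3+r)$, $\ell=3+r$ are precisely what pin the first two to $-3$ and $r^2+4r+1$ respectively, forcing the collapse to two values. The ``$-1$ minus'' identity halves this labour, and part (ii) then requires only correct tracking of the parameter substitution and of the image of the partition under $(23)(47)(59)(68)$.
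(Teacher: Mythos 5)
Your proof is correct and takes essentially the same approach as the paper, which establishes this theorem by applying the Bannai--Muzychuk criterion (Lemma \ref{Bannai-Muzychuk}) to the column-summed character table of $\mathcal{A}\otimes\mathcal{A}$ (with the computation delegated to GAP); your hand calculation of the merged first-class values $3(r+1)(r+3)$, $-3$ (on $\chi_{01},\chi_{10},\chi_{12},\chi_{21},\chi_{22}$), and $r^2+4r+1$ (on $\chi_{02},\chi_{20},\chi_{11}$), pairwise distinct exactly when $(r+2)^2\neq 0$, together with the row-sum $=-1$ shortcut, is a faithful by-hand version of that check. The only quibble is that the switch $A_{ij}\mapsto A_{i'j'}$ is not literally an algebra automorphism of $\mathbb{C}[\mathcal{A}\otimes\mathcal{A}]$ (it does not preserve the structure constants); it is a re-indexing of the basis that carries fusion partitions of the scheme with parameters $(k,\ell,r,s)$ to fusion partitions of the scheme with parameters $(\ell,k,-1-s,-1-r)$ --- which is exactly the parameter transport you carry out via $(23)(47)(59)(68)$, so your derivation of the conditions $r=1$, $k=2-s$, $\ell=-k(s+1)$ in part (ii) stands.
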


\begin{theorem}\label{Clebsch2}
If $k=r(2r+1)$, $\ell=(r-1)(2r+1)$, and $s=-r$, then the character table $\mathcal{P}(\mathcal{A})$ has the form 

\[
\mathcal{P}(\mathcal{A})= 
    \begin{blockarray}{cccc}
      \begin{block}{[ccc]c}
        1 & r(2r+1) & (r-1)(2r+1) \bigstrut[t] & 1\\
        1 & r & -1-r & r(2r+1)\\
        1 & -r & -1+r & (r-1)(2r+1)\\
      \end{block}
    \end{blockarray}.
\]

\begin{enumerate}[label=(\roman*)]
\item For the above association scheme, the tensor square has the following rank $3$ fusion other than the guaranteed fusions in Lemma \ref{Lemma:GuaranteedLattice}.
$$ 2468|3579 $$

\item The above fusion has a switch partner which gives the rank $3$ fusion,  
$$ 2459|3768 $$
This occurs in the case $k=(s+2)(2s+1)$, $\ell=(s+1)(2s+1)$, and $r=-2-s$.

\end{enumerate}
\end{theorem}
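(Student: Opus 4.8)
The plan is to establish the theorem in two stages: first confirm that the three parameter relations are mutually consistent and force the displayed character table $\mathcal{P}(\mathcal{A})$, and then verify the fusion $2468|3579$ by a single application of the Bannai--Muzychuk criterion (Lemma~\ref{Bannai-Muzychuk}) to the Kr\"onecker square $\mathcal{P}\circ\mathcal{P}$. For the first stage I would substitute $k=r(2r+1)$ and $\ell=(r-1)(2r+1)$ into the identity $s=\tfrac{-(k+kr+k\ell)}{k+kr+r\ell}$ of Lemma~\ref{parameters}(2); checking that this returns $s=-r$ amounts to the identity $k(1+r+\ell)=r(k+kr+r\ell)$, both sides reducing to $2r^{3}(2r+1)$. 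This shows the relations are compatible and pins the eigenvalue rows to $(1,k,\ell)$, $(1,r,-1-r)$, and $(1,-r,r-1)$. The multiplicities in the last column follow from the formulas of \S2 together with $n=k+\ell+1=4r^{2}$ and the column relation $k+fr+gs=0$; since multiplicities play no role in the fusion test, I would record them only to complete the table.

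The heart of the argument is part (i). Under the single-index labelling $C_{3j+i+1}=A_{ij}$, the partition $\uptau=2468|3579$ has blocks $B_1=\{A_{10},A_{01},A_{21},A_{12}\}$ and $B_2=\{A_{20},A_{11},A_{02},A_{22}\}$ together with the identity. Writing $a_i=\chi_i(A_1)$ and $c_i=1+\chi_i(A_2)$, the $A_{00}$-entry of each row $\chi_{ij}=\chi_i\otimes\chi_j$ is $1$, and the two block column-sums of row $\chi_{ij}$ in $\mathcal{P}\circ\mathcal{P}$ are
\[
B_1:\ a_ic_j+a_jc_i, \qquad B_2:\ c_ic_j-1+a_ia_j .
\]
The point is that $s=-r$ makes $a_0=k$, $a_1=r$, $a_2=-r$, $c_0=1+\ell$, $c_1=-r$, and $c_2=r$; in particular $a_1=c_2=r$ and $a_2=c_1=-r$, while the valency relation collapses to $k-\ell=2r+1$. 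Substituting these values into the two block-sum formulas merges the nine rows into exactly three: $\chi_{00}$ yields the valency row $\bigl(1,\,2k(1+\ell),\,k^{2}+\ell^{2}+2\ell\bigr)$; the four rows $\chi_{01},\chi_{10},\chi_{11},\chi_{22}$ all yield $\bigl(1,\,-2r^{2},\,2r^{2}-1\bigr)$; and the four rows $\chi_{02},\chi_{20},\chi_{12},\chi_{21}$ all yield $\bigl(1,\,2r^{2},\,-2r^{2}-1\bigr)$.

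As these three rows are pairwise distinct for every admissible $r\ge 2$ (feasibility forces $\ell=(r-1)(2r+1)\ge 1$), the matrix $(\mathcal{P}\circ\mathcal{P})_{\uptau}$ has exactly $3=|\uptau|$ distinct rows, so Lemma~\ref{Bannai-Muzychuk} yields the claimed rank~$3$ fusion; noting that $2468|3579$ is none of the thirteen partitions in Lemma~\ref{Lemma:GuaranteedLattice} confirms it is a genuine special case fusion. For part (ii) I would use the switch $A_1\leftrightarrow A_2$ in both tensor factors, the map $A_{ij}\mapsto A_{\sigma(i)\sigma(j)}$ with $\sigma=(12)$, which acts on the indices as $(23)(47)(59)(68)$ and carries $2468|3579$ to $2459|3768$. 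A scheme with parameters $(k,\ell,r,s)$ then admits the fusion $2459|3768$ if and only if its switch, whose parameters are $(\ell,k,-1-s,-1-r)$, satisfies the part~(i) relations; rewriting those relations in terms of $(k,\ell,r,s)$ gives $\ell=(s+1)(2s+1)$, $k=(s+2)(2s+1)$, and $r=-2-s$, exactly the stated conditions.

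The only genuinely delicate step is the block-sum collapse in part~(i): the four-fold merges are not visible from the raw $9\times 9$ table, so I would make the governing identities explicit rather than rely on the computation. Specifically, the internal merge $\chi_{11}\sim\chi_{22}$ follows from $a_1^{2}=a_2^{2}$ and $c_1^{2}=c_2^{2}$ together with $a_1c_1=a_2c_2$, all immediate from $a_2=-a_1=c_1=-c_2$, whereas the cross-merge $\chi_{01}\sim\chi_{11}$ (and likewise $\chi_{02}\sim\chi_{12}$) rests specifically on the valency identity $k-\ell=2r+1$. Isolating these two facts both explains why the coincidence occurs and replaces the eight-way case check by a short substitution; the remaining distinctness and the switch-partner bookkeeping are routine. (Alternatively, exactly as in Theorems~\ref{Payley}--\ref{Clebsch}, the single partition $2468|3579$ can be certified directly by the \texttt{GAP} implementation of Lemma~\ref{Bannai-Muzychuk}, with the hand computation above serving as the conceptual explanation.)
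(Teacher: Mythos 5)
Your proposal is correct and takes essentially the same route as the paper: both rest on applying the Bannai--Muzychuk criterion (Lemma~\ref{Bannai-Muzychuk}) to the Kr\"onecker square $\mathcal{P}\circ\mathcal{P}$, the paper delegating the verification to its GAP implementation while you carry it out by hand. Your block-sum formulas $a_ic_j+a_jc_i$ and $a_ia_j+c_ic_j-1$, the resulting collapse of the nine rows to three (driven by $a_2=-a_1=c_1=-c_2=-r$ and $k-\ell=2r+1$), and the switch-partner computation via $(23)(47)(59)(68)$ for part (ii) all check out against the stated parameter conditions.
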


As pointed out in \cite{HermanJoshiMeagher2022}, $r$ must be $\ge 2$ in the first part of Theorem \ref{Clebsch2}.  The character tables in Theorem \ref{Clebsch} and \ref{Clebsch2} coincide in the case $r=2$, in this case the association scheme is that of the Clebsch graph.  Strongly-regular graphs with this character table are known to exist for $2 \le r \le 6$, beyond that they are not always known to exist.

\section{Our main result}

Let $\mathcal{A} = \{A_0 = I, A_1, A_2 \}$ be the basis of adjacency matrices of a symmetric rank $3$ association scheme, or the standard basis of a rank $3$ symmetric table algebra.  In this section, we will describe how we have used a computer program to verify that the fusions listed in Theorems \ref{Completegraphs}--\ref{Clebsch2} are all of the possible fusions of $\mathcal{A} \otimes \mathcal{A}$.  

\begin{theorem}\label{mainresult}
If $\mathcal{A}$ does not belong to the families of strongly-regular graphs listed in Theorems \ref{Completegraphs}--\ref{Clebsch2}, then the only fusions of the tensor square of $\mathcal{A}$ are the $13$ guaranteed fusions listed in Lemma \ref{Lemma:GuaranteedLattice}.

\end{theorem}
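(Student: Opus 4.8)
The plan is to establish the contrapositive in the form already used throughout \S\ref{sec:classifying}: I would show directly that \emph{every} partition $\uptau$ of $\{2,\dots,9\}$ other than the $13$ of Lemma \ref{Lemma:GuaranteedLattice} satisfies the Bannai--Muzychuk criterion (Lemma \ref{Bannai-Muzychuk}) only when the parameters $(k,\ell,r,s)$ lie in the union of the parameter loci cut out by Theorems \ref{Completegraphs}--\ref{Clebsch2}. Since the $13$ guaranteed fusions and the fusions attached to the special families are all already accounted for, this single containment is exactly the assertion of the theorem. Throughout I work with the explicit $9\times 9$ symbolic character table $\mathcal{P}\circ\mathcal{P}$ displayed above, taking $k,r,s$ as the free parameters with $\ell$ (and the multiplicities) determined through Lemma \ref{parameters}, subject to the sign and feasibility constraints recorded there.

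First I would enumerate all $4140$ partitions of the $8$-element set $\{2,\dots,9\}$ and pass each through a sieve of cheap necessary conditions, discarding any partition whose would-be fusion is inconsistent with being a symmetric rank-$|\uptau|$ table algebra: valency additivity for the fused classes (read off from the $\chi_{00}$ row), together with the integrality and nonnegativity forced on the resulting structure constants, already eliminates the overwhelming majority of candidates. For each partition surviving the sieve I would apply Lemma \ref{Bannai-Muzychuk}, which demands that summing the relation-columns of $\mathcal{P}\circ\mathcal{P}$ according to $\uptau$ leave exactly $|\uptau|$ distinct rows, exactly as was illustrated for the wreath product earlier. Collapsing to precisely $|\uptau|$ rows forces a prescribed set of row coincidences (polynomial \emph{equations} in $r,s$) while the remaining rows must stay pairwise distinct (polynomial \emph{inequations}); since every entry of $\mathcal{P}\circ\mathcal{P}$ is a product of two entries from $\{1,k,\ell,r,s,-1-r,-1-s\}$, only finitely many symbolic row-values occur, so the coincidence patterns to be tested form a finite, exhaustively enumerable list, and it is this finiteness that makes the search provably complete.

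For each surviving candidate $\uptau$ not among the $13$, the forced coincidence equations define a variety $V_\uptau$ in $(r,s)$-space. A partition is a guaranteed fusion exactly when these equations hold identically, which is how the $13$ of Lemma \ref{Lemma:GuaranteedLattice} are recognised; otherwise $V_\uptau$ is a proper subvariety. Substituting $\ell$, $k+rs$, and $1+r+s+rs$ from Lemma \ref{parameters} turns each coincidence into a polynomial in $r,s$, which I would factor and intersect with the feasibility region of Lemma \ref{parameters}. The content of the theorem is then the statement that every such proper $V_\uptau$ is contained in the union of the loci defining the families of Theorems \ref{Completegraphs}--\ref{Clebsch2}, namely $\{k=r,\ s=-1\}$ and $\{\ell=-1-s,\ r=0\}$ (imprimitive, Theorem \ref{Completegraphs}), $\{k=\ell,\ s=-1-r\}$ (conference, Theorem \ref{Payley}), $\{k=s^2,\ \ell=-2s,\ r=1\}$ (Theorem \ref{News}), $\{k=3-s-r,\ \ell=5+s+r\}$ (Theorem \ref{Crazyrank4}), $\{k=r(3+r),\ s=-2\}$ (Theorem \ref{Clebsch}), and $\{k=r(2r+1),\ s=-r\}$ (Theorem \ref{Clebsch2}), together with their switch partners.

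The hard part is not any single calculation but certifying two global facts. The first is completeness: that the enumeration of coincidence patterns over \emph{all} surviving partitions is exhaustive, which is secured by the sieve (it shortens the candidate list enough to examine every partition) together with the finiteness of symbolic row-values noted above. The second is the absence of a spurious extra family: each nontrivial coincidence polynomial must be shown to factor through one of the listed loci rather than to define a genuinely new feasible parameter set. I would discharge both by the computer implementation of \S\ref{sec:classifying}, with GAP \cite{GAP4} (using the code in \cite{GitHubRepository}) returning, for every partition, both the imposed equations and the resulting distinct-row count, followed by a final pass matching each returned condition against the loci above. The one genuinely delicate point that still wants a human argument is the inequation side: one must confirm that the non-collapsing rows remain distinct on \emph{all} of $V_\uptau$ and not merely at its generic point, since an accidental further coincidence on a subvariety could silently change the rank; this is settled just as in the by-hand wreath-product computations, where distinctness was verified through explicit parameter inequalities such as $\ell+n(n-1)>\ell-n$.
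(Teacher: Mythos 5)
Your proposal follows essentially the same route as the paper's proof: an exhaustive, computer-assisted pass over all $4140$ partitions of $\{2,\dots,9\}$, a sieve eliminating partitions whose forced row coincidences are incompatible with the parameter constraints of Lemma \ref{parameters}, application of the Bannai--Muzychuk criterion to count distinct rows (including the check that non-collapsed rows stay distinct, which the paper handles in its fourth worked example), and a final matching of each surviving coincidence locus against the families of Theorems \ref{Completegraphs}--\ref{Clebsch2} and their switch partners. The only cosmetic difference is that you phrase the sieve in terms of valency/structure-constant feasibility and varieties $V_\uptau$, whereas the paper sieves with an explicit set $U$ of polynomials that cannot vanish for primitive parameters; the substance is the same.
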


\begin{proof}
We know that the rank of a fusion corresponds to the number of columns in the character table which are analysed case by case and are referenced by their category in the file {\tt AllPartitionsDataCategorized.txt} on our GitHub repository \cite{GitHubRepository}. The special case fusions covered in Theorems \ref{Completegraphs}--\ref{Clebsch2} are also verified there using GAP. 

\medskip
All that is left to show is that the only nontrivial special case fusions occur when $\mathcal{A}$ belongs to one of the families in Theorems \ref{Completegraphs}--\ref{Clebsch2}.  The first $45$ non-trivial fusions (these are the fusions listed in Lines 129-199 of {\tt TensorProductFusions.txt} in \cite{GitHubRepository}) hold only in the rank $3$ imprimitive case and hence are ruled out for any other strongly-regular graphs or table algebras in our search if we assume that $s<-1$ and $k > r$. Also, their switch partners occur only when $r=0$ and $\ell=-1-s$, so we can assume $r>0$ and $\ell > -1-s$.

Focusing on the primitive case, we define a set $U$ (given in lines 14-29 of  {\tt TensorProductFusions.txt} in \cite{GitHubRepository}) which consists of polynomials that do not satisfy the parameters of a primitive strongly-regular graph. This set is used to sieve out partitions which require one of these polynomial conditions to give a fusion. If an equality of a pair of irreducible characters of the fusion induced by a partition requires a polynomial in $U$ to equal $0$, this pair of irreducible characters cannot hold in the fusion.  By the Bannai-Muzychuk criterion in Lemma \ref{Bannai-Muzychuk}, the partition will only induce an actual fusion when the partition it induces on irreducible characters has the same size.  Our step-by-step sieving process is explained in {\tt ReadMe.txt} file on \cite{GitHubRepository}.  

We will give four typical examples here, two examples of partitions that produce fusions and two examples of partitions that do not produce fusions. All of the other partitions of $\{2,\dots,9\}$ are dealt with in a similar fashion in \cite{GitHubRepository}.  

\medskip
\begin{enumerate}

\item Consider the partition $ 27|34|59|6|8$ given in Lines 263-267 of the file \\ {\tt AllPartitionsDataCategorized.txt } in \cite{GitHubRepository} that gives a rank $6$ fusion.
For this partition to give a nontrivial fusion, the row equalities, $\chi_{2}=\chi_{7}$, $\chi_{3}=\chi_{4}$, and $\chi_{5}=\chi_{9}$ must be satisfied. On further investigation, we can see that $\chi_{5}=\chi_{9}$ implies $r=-1-s$, and $\chi_{2}=\chi_{7}$, $\chi_{3}=\chi_{4}$ implies $k=\ell=2r+2r^2=2s+2s^2$. Hence, it is proved that this rank $6$ nontrivial fusion falls under the strongly-regular graph family given in Theorem \ref{Payley}.

\medskip
\item Consider the partition $ 249 |35678 $ given in Lines 391-413 of the above file that gives a rank $3$ fusion.
For this partition to give a nontrivial fusion, the row equalities, $\chi_{2}=\chi_{6}=\chi_{8}=\chi_{9}$, and $\chi_{3}=\chi_{4}==\chi_{5}=\chi_{7}$ must be satisfied. On further investigation, we can see that the first set of row equalities implies $r=1$, $s=-2$, and the later set of row equalities  implies $k= r\ell$, with $\ell=3+r$. Hence, it is proved that this rank $3$ nontrivial fusion falls under the strongly-regular graph family given in Theorem \ref{Clebsch}.

\medskip
\item Consider the partition $ 23489 |567 $ given in Lines 521-534 of the same file. Although initial investigation shows that this partition satisfies the Bannai-Muzychuk criterion and gives a rank $3$ fusion with row equalities, $\chi_{2}=\chi_{3}=\chi_{5}=\chi_{6}=\chi_{7}$, and $\chi_{4}=\chi_{8}=\chi_{9}$. On further investigation, we can see that the first set of row equalities implies $kr+\ell r-r+2s=0$ with $k-\ell=1-2r$, and the later set of row equalities  implies $ks+\ell s+2r-s=0$. Since, all of these conditions must be satisfied, we next check for compatibility of the row equalities. We combine the following two equations $$ 
kr+\ell r-r+2s=0, ~ \textrm{and} ~ks+\ell s+2r-s=0,
$$ to get $k+\ell =3$. Since we also have $k-\ell=1+2r$ this implies $\ell=1-r$ which is a contradiction. Hence, we prove that that this rank $3$ partition does not give a nontrivial fusion.

\medskip
\item Consider the partition $2678|34|59$ given in Lines 2118-2124 of the same file in (1). A quick glance on the row equality conditions doesn't bring out any obvious contradictions. In this case we can observe that even if all the row equalities are satisfied and compatible with each other, this implies $\chi_{2}=\chi_{6}=\chi_{7}$, $\chi_{3}=\chi_{4}$ and $\chi_{5}=\chi_{9}$ we still end up with at least $5$ distinct rows in the modified character table. Hence, using Lemma \ref{Bannai-Muzychuk} we prove that that this rank $4$ partition does not give a nontrivial fusion.

\end{enumerate}

\medskip
After running the sieve, we find that all of the remaining partitions corresponding to special case fusions are either the ones listed for one of the $6$ strongly-regular graph families in the previous section, or their switch partners.  So this verifies the theorem. 
\end{proof}

%\printbibliography %Prints bibliography
\bibliographystyle{plain}
\bibliography{sample}

\end{document}